\theoremstyle{plain}
\newtheorem{theorem}{Theorem}[section]
\newtheorem{proposition}[theorem]{Proposition}
\newtheorem{corollary}[theorem]{Corollary}
\newtheorem{lemma}[theorem]{Lemma}
\newtheorem{conjecture}[theorem]{Conjecture}
\newtheorem{remark}[theorem]{Remark}
\theoremstyle{definition}
\newtheorem{definition}[theorem]{Definition}
\newtheorem{example}[theorem]{Example}
\newcommand{\Ext}{\operatorname{Ext}}
\newcommand{\add}{{\rm add}}
\newcommand{\End}{{\rm End}}
\newcommand{\SL}{{\rm SL}}
\newcommand{\CM}{{\rm CM}}
\newcommand{\Gr}{\rm Gr}
\newcommand{\Conf}{\rm Conf}
\newcommand{\tcfr}[3]{{\begin{psmallmatrix}  #1 & \\ & #3  \\ #2 & \end{psmallmatrix}}}
\newcommand{\A}{\mathcal{A}}
\title{Cluster Categorification of Rank 2 Webs}
\author{Ian Le and Emine Yıldırım}
\date{\today}
\address{Ian Le, Mathematical Sciences Institute
Australian National University, Canberra ACT 2601, Australia}
\email{ian.le@anu.edu.au}
\address{Emine Yıldırım, International Center for Mathematical Sciences - Sofia, Bulgarian Academy of Sciences, Sofia
1113, Bulgaria}
\email{e.yildirim@math.bas.bg}
\begin{document}

\maketitle

\begin{abstract} 
    The homogeneous coordinate ring of the Grassmannian $\Gr(k,n)$ has a well-known cluster structure~\cite{S}. There is a categorification of this cluster structure via a category of modules for a ring $\A_{k,n}$ due to Jensen-King-Su \cite{JKS}, building on work of Geiss-Leclerc-Schr\"oer \cite{GLS}, in which cluster variables correspond to indecomposable rigid modules. We give a combinatorial description of modules that correspond to rank $2$ cluster variables by using \emph{webs}. This conjecturally gives the categorification of all rank $2$ cluster variables.
\end{abstract}


\section{Introduction}

A central example of a cluster algebra comes from the homogeneous coordinate ring $\mathbb{C}[\widehat{\Gr (k, n)}]$ of the Grassmannians, which was first discussed by Fomin-Zelevinsky~\cite{FZ} for the case $k = 2$, and then by Scott~\cite{S} for $k \geq 2$. The study of these cluster algebras has led to many interesting connections between different parts of mathematics and physics.

One powerful technique in the study of cluster algebras is \emph{categorification}. Geiss-Leclerc-Schröer~\cite{GLS} and Jensen-King-Su~\cite{JKS} developed a representation-theoretic approach to the categorification of the cluster algebra structure on the Grassmannians. In this approach, cluster variables correspond to rigid indecomposable modules in some category of modules. Mutation of cluster variables corresponds to certain exact sequences of modules. Starting from a module, one can recover the corresponding element of the cluster algebra using the cluster character formula.

Another perspective on cluster structures on Grassmannians comes from invariant theory. Cluster monomials are elements in Lusztig’s dual semi-canonical basis. Fomin-Pylyavskyy~\cite{FP} study cluster monomials and more generally the dual semi-canonical basis through $SL_3$-invariant theory using the web basis introduced by Kuperberg~\cite{K}. They formulate precise and bold conjectures about expressing cluster variables in terms of webs. Their conjectures were one of the motivations behind this paper.

In this paper, we relate the perspectives of categorification and invariant theory. In particular, we study a family of rank $2$ functions on the Grassmannian. We express these functions via webs, and we show that they are cluster variables. Moreover, we give a combinatorial procedure for writing down the modules that categorify these webs. We conjecture that these are in fact all the rank $2$ cluster variables on the Grassmannian, and we provide evidence for this conjecture. Our bijection between functions and modules is a natural generalization of the rank $1$ version given in \cite{JKS}.

Let us outline the contents of this paper. In Sections~\ref{sec:web2} and~\ref{sec:mod2}, we give a brief introduction to rank $2$ webs and modules we work with, introduce the definitions and necessary notation. In Section~\ref{sec:wm}, we construct the bijection between webs and modules via the map $\Psi$. We also state our main theorem (Theorem~\ref{mainthm}). An important conceptual ingredient of the paper is given in Section~\ref{sec:stretching}, where we describe certain operations on webs and modules that we call ``stretching''. These operations are called \emph{stretching functions}. Section~\ref{sec:rank2webs} is devoted to proving that the webs we consider are all cluster variables. Finally, in Section~\ref{sec:cactusmain}, we begin by recalling some terminology and results from~\cite{L19} about the cactus sequence. We explicitly write down short exact sequences of modules for the mutations in this sequence, which allows us to prove the correspondence between rank $2$ webs and modules. 

\subsection*{Acknowledgements} We thank Alastair King for his valuable comments on the first draft of this paper and pointing out the use of immanants (Remark~\ref{rem:immanants}). This project started during the Junior Trimester Programme: New Trends in Representation Theory at the Hausdorff Institute for Mathematics in Bonn, 2020. Part of this work is completed at the CAR programme at the Isaac Newton Institute for Mathematical Sciences in 2021 (supported by EPSRC grant no EP/R014604/1). We thank organisers and both institute for their support. EY is supported by different bodies: in part by the Royal Society Wolfson Award RSWF/R1/180004 and by the Ministry of Education and Science of the Republic of Bulgaria with grant no {DO1-239/10.12.2024} and the Simons Foundation with grant no {SFI-MPS-T-Institutes-00007697}.

\section{Webs of Rank $2$}~\label{sec:web2}

A $k$-web is a bipartite graph drawn in a disc with weights on the edges such that the sum of weights around each interior vertex is $k$. We will be interested in $k$-webs that are inside a disc with $n$ vertices on the boundary, all colored black, numbered $1$ to $n$. Denote  $[n]:=\{1, 2,\cdots, n\}.$ 

Any $k$-web will have some number of edges ending at the boundary vertices. Let us call such edges \emph{leaves}. Note that leaves can intersect each other inside the disk or on the boundary vertices. We will restrict our attention to webs such that the leaves all have weight $1$. A boundary vertex can have multiple (or no) leaves attached to it.

For any $k$-web with all leaves of weight $1$, the number of leaves is a multiple of $k$. We will call a $k$-web with $rk$ leaves a $k$-web (or simply a web) of rank $r$.

In this paper, we will only consider rank $1$ and rank $2$ webs. Rank $1$ webs consist of one interior white vertex and $k$ boundary black vertices. The rank $2$ webs $W$  we consider will consist of a tree with one interior black vertex and three white vertices inside the disk and edges as shown below: White vertices are connected to the boundary vertices with a finite number of edges. 

\begin{figure}[H]
    \centering
    \includegraphics[width=9cm]{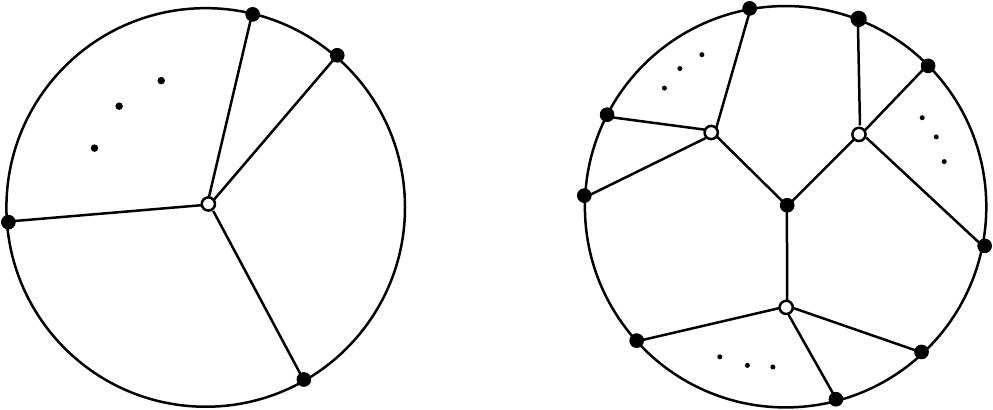}
    \caption{An illustration of rank $1$ and $2$ webs.}
    \label{rank2web}
\end{figure}

The set $[n]$ has a natural cyclic structure when placed on a boundary of a circle. For any $i \in [n]$ the cyclic order induces a total order $<_i$, where $i <_i i+1 <_i \cdots <_i n <_i 1 <_i \cdots <_i i-1$. Let $R, S, T$ be three subsets of $[n]$. We will say that they are \emph{cyclically separated} if there is some $i$ such that all the elements of $R$ are less than all the elements of $S$, which are in turn less than all the elements of $T$ in the ordering $<_i$. In particular, the cyclically separated sets are disjoint.

Let $R, S, T$ be cyclically separated subsets of $[n]$ and let $V \subset [n]$ be an arbitrary (possibly empty) set disjoint from $R, S$ and $T$. Suppose further that $|R|=a, |S|=b, |T|=c, |V|=d$ and $a+b+c+2d=2k$. Then we can define a rank $2$ web $W(R,S,T,V)$ as follows: The elements in $R, S$, $T$ and $V$ will be the labels of boundary vertices that are leaves of the tree. The vertices in $R$ will be attached to the first white vertex; the vertices in $S$ will be attached to the second white vertex, and the vertices of $T$ will be attached to the third white vertex. Each vertex of $V$ will be attached to two white vertices, but which white vertices they are attached to inside the disk does not matter. Changing the attachments of the vertices in $V$ will give us equivalent webs, in the sense that they give the same function on the Grassmannian, see~\cite[Section 5]{FP}.

Denote by $\mathcal{W}$ the set of such rank $2$ webs. 

A $k$-web of rank $r$ on the vertex set $[n]$ induces a degree $r$ function on the homogeneous coordinate ring, $\mathbb{C}[\Gr(k,n)]$, of the Grassmannian $\Gr(k,n)$: Such a web gives a function of $n$ vectors in a $k$-dimensional vector space; moreover, this function is invariant under the action of $SL_k$. Therefore, these invariant functions in $\mathbb{C}[\Gr(k,n)]$ can be understood by viewing a point in the Grassmannian as a $k \times n$ matrix and evaluating the invariant on the columns of the matrix.

For example, recall that a Plücker coordinate in $\mathbb{C}[\Gr(k,n)]$ is an $\SL_k$- invariant function of $k$ vectors. This is represented by $k$-valent graph shown in Figure~\ref{rank2web} (left). Throughout this paper, we will therefore view webs as functions on the Grassmannian. For a detailed treatment of the subject, we refer the reader to ~\cite[Example 3.4]{FLL},~\cite[Section 5]{FP}.

\begin{example}
    Consider the homogeneous coordinate ring $\mathbb{C}[\Gr(3,6)]$. The cluster algebra structure found in this ring is of Dynkin type $D_4$; twenty-two cluster variables of which we have six frozen variables. Since all Plücker coordinates are cluster variables, we have $\binom{6}{3}=20$ of them. This means we have two non-Plücker cluster variables in this case. They can be found using exchange relations, or skein relations on webs~\cite{FP}.

\begin{figure}[H]
    \centering
    \includegraphics[width=13cm]{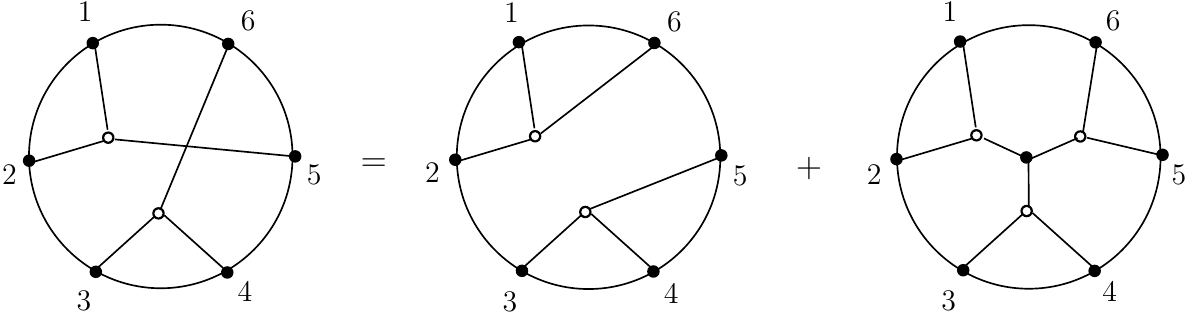}
    \caption{A rank $2$ web in an exchange relation.}
    \label{D4}
\end{figure}

The rightmost web in Figure~\ref{D4} and its rotation by one are the two rank $2$ webs which are cluster variables on $\Gr(3,6)$.
\end{example}

\begin{conjecture} The set $\mathcal{W}$ realizes the complete set of rank $2$ cluster variables on the Grassmannian $\Gr(k,n)$.
\end{conjecture}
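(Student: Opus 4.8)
The statement splits into two inclusions. That every web in $\mathcal{W}$ defines a cluster variable is proved in Section~\ref{sec:rank2webs}, so the remaining content is the converse: every rank $2$ cluster variable of $\mathbb{C}[\widehat{\Gr(k,n)}]$ arises as some $W(R,S,T,V)$. The plan is to argue on the categorified side. By \cite{GLS,JKS}, cluster variables correspond bijectively to rigid indecomposable modules in $\CM(\A_{k,n})$, and the rank of the cluster variable (its degree for the natural grading) equals the rank of the module, with rank $1$ recovering the Plücker coordinates. Using the map $\Psi$ of Section~\ref{sec:wm} and Theorem~\ref{mainthm}, which identify $\Psi(\mathcal{W})$ with an explicit family of rank $2$ modules, the conjecture becomes equivalent to
\[
\Psi(\mathcal{W}) \;=\; \{\, M \in \CM(\A_{k,n}) \ :\ M \text{ indecomposable and rigid with } \operatorname{rk} M = 2 \,\}.
\]
The short exact sequences of Section~\ref{sec:cactusmain} yield the inclusion $\subseteq$ (they place each $\Psi(W)$ inside a cluster-tilting object), so the real problem is $\supseteq$.

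To obtain $\supseteq$ I would first build a combinatorial model of \emph{all} rank $2$ modules in $\CM(\A_{k,n})$, extending the rim/profile description of the rank $1$ modules: a rank $2$ module is assembled from two rank $1$ ``sheets'' together with gluing data recording where the sheets are identified, hence is encoded by a pair of cyclic profiles plus an interval (or union of intervals) of overlap. The decomposable such modules are precisely the direct sums of two rank $1$ modules; they categorify products of Plücker coordinates and so are never cluster variables, and may be discarded. For the indecomposable ones I would compute $\Ext^1(M,M)$ explicitly in terms of the profile and gluing data, using the $2$-periodic projective resolutions over $\A_{k,n}$ (equivalently the Auslander--Reiten formula together with the known structure of the stable category, as in the rank $1$ computations of \cite{JKS}).

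The heart of the argument — and the step I expect to be the main obstacle — is to show that $\Ext^1(M,M)=0$ forces $M$ to have exactly the shape of $\Psi(W(R,S,T,V))$. Self-extensions of a rank $2$ module come from two sources: extensions ``within a sheet'', whose vanishing forces each sheet to be a genuine $k$-subset (rigid of rank $1$), and ``crossing'' extensions between the two sheets; I expect the latter to vanish precisely when the overlap is a single interval and the two $k$-subsets interleave in the cyclically separated pattern $R,S,T$, with the doubled indices forming $V$ and $|R|+|S|+|T|+2|V|=2k$. Pinning this down requires a careful analysis of the $\Ext$-quiver of rank $2$ modules, for which no general structure theory beyond rank $1$ is presently available, and this is where the proof will be longest. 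The stretching functions of Section~\ref{sec:stretching} should reduce the problem to a tight base case (no index in $V$, the sets $R,S,T$ as small as possible) from which one inducts.

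As a consistency check, and as a possible shortcut, I would test the classification against cases where the cluster structure is of finite type or otherwise small — $\Gr(3,n)$ for $n\le 8$, $\Gr(4,8)$, $\Gr(4,9)$ — verifying that the number of rank $2$ cluster variables equals $|\mathcal{W}|$ computed combinatorially. An alternative route to $\supseteq$ that avoids the full $\Ext$ classification is to run the cactus sequence of \cite{L19}: given a seed containing a putative rank $2$ cluster variable $x$, mutate $x$ along the cactus sequence while tracking the corresponding modules through the short exact sequences of Section~\ref{sec:cactusmain}, and show the sequence always reaches a module of rank $\le 1$ or some $\Psi(W)$, forcing $x\in\mathcal{W}$. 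Either way, the crux is the control of self-extensions of rank $2$ modules.
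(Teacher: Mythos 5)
This statement is a \emph{conjecture} in the paper: the authors do not prove it, and neither do you. The paper establishes only one inclusion --- every $W\in\mathcal{W}$ is a rank $2$ cluster variable with module $\Psi(W)$ (Theorem~\ref{mainthm}) --- and offers the converse as a conjecture supported by evidence (Fraser's description of the rank $2$ canonical basis by webs, the arborizability philosophy of \cite{FP}, and the agreement of $|\mathcal{W}|$ with the counts of \cite{BBE,BBEL}). Your proposal correctly identifies this structure and reduces the converse to showing that every rigid indecomposable rank $2$ module in $\CM(\A_{k,n})$ lies in $\Psi(\mathcal{W})$. That reduction is sound in outline, but the step you yourself flag as ``the main obstacle'' --- proving that $\Ext^1(M,M)=0$ forces the three-box profile --- is precisely the open problem. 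It is essentially equivalent to \cite[Conjecture 5.15]{BBE} and to the conjecture following \cite[Theorem 4.7]{BBEL} that $N_{k,n}$ exhausts the rigid rank $2$ indecomposables; no computation of self-extensions of rank $2$ modules in terms of profiles and gluing data currently exists, and you do not supply one. So what you have written is a research plan whose crux coincides with the known open question, not a proof.

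Two further points to be careful about. First, your opening claim that cluster variables ``correspond bijectively to rigid indecomposable modules'' is stronger than what the categorification gives: cluster variables correspond to \emph{reachable} rigid indecomposables, and whether every rigid indecomposable is reachable is itself not known here (for the direction you need this is harmless, since a rank $2$ cluster variable does yield a rigid indecomposable rank $2$ module, but the statement as written overreaches). Second, the proposed ``alternative route'' --- running the cactus sequence from an arbitrary seed containing a putative rank $2$ cluster variable --- does not obviously terminate or stay within controlled profiles: the cactus sequence is one specific mutation sequence from one specific cluster on $\Conf_3\A$, and there is no argument that an arbitrary seed of $\Gr(k,n)$ connects to it while keeping the tracked module of bounded rank. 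Neither route closes the gap, so the conjecture remains open.
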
 

Strong evidence for this is the work of Fraser \cite{F22} which describes the full canonical basis of rank $2$ functions as webs. Among those webs, the only arborizable ones are those in $\mathcal{W}$. Informally speaking, arborization is a process that turns a planar web into a possibly self-intersecting \emph{tree diagram} if possible, see~\cite[Definition 10.3]{FP}. It is believed that a web must be arborizable in order to be a cluster variable \cite{FP}. (They conjecture this for $k=3$, but we believe it has a good chance of being true for $k>3$ as well.) Note that the number of webs in $\mathcal{W}$ is also precisely the number of isomorphism classes of rank $2$ indecomposable rigid modules predicted in \cite{BBL}.

\section{JKS Modules}~\label{sec:mod2}

We follow the notation of Jensen-King-Su~\cite{JKS}. Their paper gives an additive categorification of the cluster algebra structure in $\mathbb{C}[\Gr (k,n)]$ by using certain Cohen-Macaulay (CM) modules. We will be interested in rank $1$ and rank $2$ CM modules defined in~\cite{JKS}. 
Let us give a gentle introduction to this categorification. Fix $k, n.$ Let $Q=(Q_0,Q_1)$ be a quiver with a vertex set $Q_0$ and an edge set $Q_1$ as shown in Figure~\ref{quiverQ}. Let $\A_{k,n}$ be the path algebra of $Q$ with the relations `$y_{i+1}x_{i+1}=x_iy_i$' and `$x_{i+k}\cdots x_{i+2} x_{i+1}=y_{i-(n-k-1)} \cdots y_{i-1} y_i$' for each vertex $i$. We consider $1,\cdots, n$ cyclically, so $0=n$ and so on. Notice that we have $2n$ relations in total. 

\begin{figure}[ht]
    \centering
    \includegraphics[width=5cm]{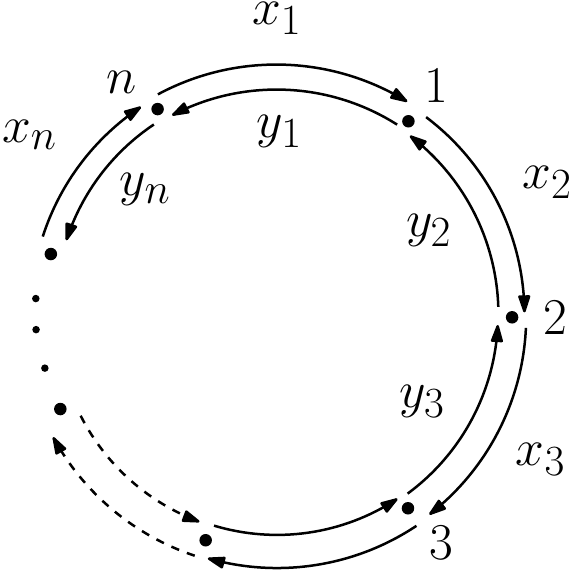}
    \caption{The quiver $Q.$}
    \label{quiverQ}
\end{figure}

For the simplicity of the presentation, we mostly draw $Q$ in a line with $n+1$ vertices thinking that vertices $0$ and $n$ identified (see Figure~\ref{1ex}).

In \cite{JKS}, they showed that rank $1$ modules are in bijection with the $k$-subsets of $[n]$ which are Plücker coordinates in the corresponding Grassmannians. We use a \emph{profile}, defined in~\cite{JKS}, for rank $1$ modules as below. 

\begin{definition} A \emph{contour} of a rank $1$ module is a series of up and down moves written respecting $k$-subsets of $[n]$ as down steps and the rest of $n-k$ as up steps.

A \emph{profile} of a module is a collection of contours.
\end{definition}

\begin{figure}[H]
    \centering
    \includegraphics[width=3.7cm]{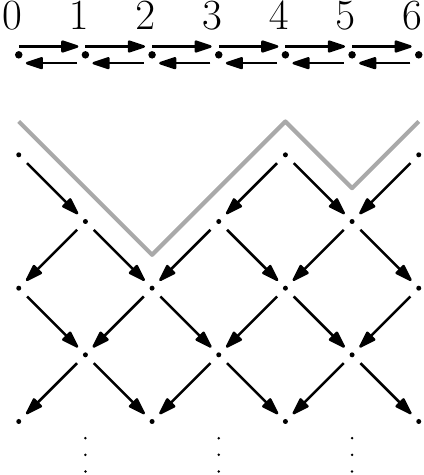}
    \caption{The quiver Q and profile of a rank $1$ module $I=\{1,2,5\}$. The contour of I is shaded in gray.}
    \label{1ex}
\end{figure}

In Figure~\ref{1ex}, we consider a rank $1$ module $I=\{1,2,5\}$ for $Gr(3,6)$. We describe the contours of rank $1$ modules as a sequence of $n-k$ $U$'s (up) and $k$ $D's$ (down). The contour of $I=\{1,2,5\}$ in $Gr(3,6)$ is $DDUUDU.$

In \cite{JKS} they further show that every rigid indecomposable CM module has a generic filtration by rank $1$ modules. Thus we can write a profile for $M$ by superposing the profiles of rank $1$ modules in this filtration with an alignment that respects the order of the rank 1 modules in the filtration; see example below.


We will be working with the rank $2$ modules $M$ which are formed by a packing of two rank $1$ modules, denoted by $\frac{I}{J}$. We call $\frac{I}{J}$ a profile of $M$ as in~\cite{JKS}. We draw the contour of $I$ first and then add the contour of $J$ as close as possible to $I$. We are ``packing'' the contours as close together as possible -- this is necessary in order to obtain indecomposable rank $2$ modules. Examples are shown below in Figure~\ref{2ex}.

Note that there are regions appearing between the contours in a rank $2$ module. We will call these regions ``boxes'' (a similar definition was given in \cite[Definition 4.1]{BBEL}).

Let us make this notion precise. We think of a profile as consisting of a string of length $n$ containing the letters $\frac{U}{U}$, $\frac{U}{D}$, $\frac{D}{U}$, $\frac{D}{D}$. A box of size $\alpha$ is a string consisting of $(\frac{U}{D})^\alpha(\frac{D}{U})^\alpha$ with $(\frac{U}{U})$ and $(\frac{D}{D})$ interspersed freely, with the restriction that it begins with $\frac{U}{D}$ and ends with $\frac{D}{U}$.

We are interested in profiles that consist of some cyclic arrangement of three boxes with strings of $(\frac{U}{U})$ and $(\frac{D}{D})$ interspersed among them.  For instance, a profile $\frac{I}{J}$ could be $\frac{DUUDUDUU}{UDUUDUUD}.$ which is the first profile in Figure~\ref{2ex}. We will see below that there is a unique indecomposable module with such a profile. 

Denote the set of all such rank $2$ modules by $\mathcal{M}$. 
 
\begin{figure}[ht]
    \centering
    \includegraphics[width=12.5cm]{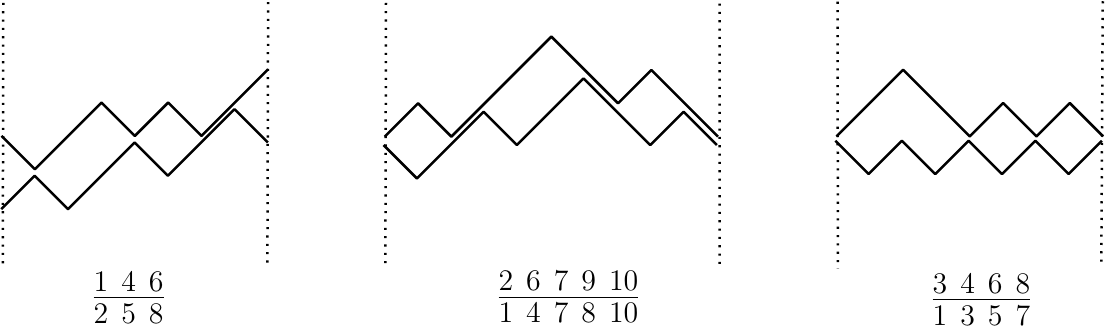}
    \caption{Examples of rank $2$ modules with three boxes.}
    \label{2ex}
\end{figure}

Let us call $A, B$ and $C$ three boxes in the profile of $M$ that are divided by contours of $I$ and $J$. Let us think of each box as consisting of a consecutive cyclic substring of $1, 2, \dots, n$. Denote $M'=\frac{I'}{J'}=\frac{I\setminus{I\cap J}}{J\setminus{I\cap J}}$ and set $I\cap J=V.$ This is to remove any common label in the profiles of $I$ and $J$ so that $I'\cap J'=\emptyset.$ Set also $I_A:=I'\cap A$, $I_B:=I'\cap B$ and $I_C:=I'\cap C$. Define $J_A, J_B$ and $J_C$ respectively.

\begin{lemma}~\label{indecs} Let $\frac{I}{J}$ be a profile consisting of three boxes with $(\frac{U}{U})$ and $(\frac{D}{D})$ interspersed. Then there is a unique indecomposable module of rank $2$ with this profile. All the modules in $\mathcal{M}$ are of this type.
\end{lemma}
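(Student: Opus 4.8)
The plan is to establish the three assertions implicit in the statement: (i) there exists a Cohen--Macaulay $\A_{k,n}$-module of rank $2$ with the given profile $\frac{I}{J}$; (ii) it is unique up to isomorphism; (iii) it is indecomposable. The last sentence of the lemma is then just the definition of $\mathcal{M}$. Throughout I work with the description of CM modules recalled from \cite{JKS}: a CM $\A_{k,n}$-module is an $\A_{k,n}$-module that is free of finite rank over the central subring $\mathbb{C}[t]$ generated by the loop element $t$ of $Q$, its \emph{rank} is this free rank, and the profile of a rank $r$ module records, step by step, the $r$ up/down moves of the $r$ rank-$1$ contours along a generic filtration (the superposition described just before the lemma). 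It is convenient to pass first to $M' = \frac{I'}{J'}$ with $I' \cap J' = \emptyset$: the positions of $V = I \cap J$ contribute only $\frac{D}{D}$-steps, which propagate transparently through the relations and affect neither the arguments below nor indecomposability --- equivalently, re-inserting them is an instance of the stretching functions of Section~\ref{sec:stretching} --- so from now on I assume $V = \emptyset$ and write $I, J$ for $I', J'$.

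\emph{Existence.} From the packed three-box profile one reads off the dimension vector $\underline{d} = (\dim M_i)_{i \in \mathbb{Z}/n}$, $d_i \in \{1,2\}$. I would build $M$ explicitly: choose a $\mathbb{C}[t]$-basis indexed by the steps of the profile and define $x_i, y_i$ to be the evident inclusions/identities at the $\frac{U}{U}$, $\frac{D}{D}$ steps and outside the boxes, and inside a box of size $\alpha$ to realize the standard ``bubble'' in which one factor of $t$ is inserted so that both the $x$-loop and the $y$-loop equal $t$. Checking the relations $y_{i+1}x_{i+1} = x_i y_i$ and the long relations of $\A_{k,n}$, and freeness over $\mathbb{C}[t]$, is then a direct local verification, and the profile of the resulting rank $2$ CM module is $\frac{I}{J}$ by construction. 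Alternatively, $M$ may be obtained as the middle term of a generic non-split extension $0 \to M_J \to M \to M_I \to 0$ (conventions chosen so the superposed profile is $\frac{I}{J}$), using the combinatorial formula of \cite{JKS} for $\Ext^1(M_I, M_J)$ to see that the box sizes of the profile of an extension are governed by the mutual ``distance'' of the two rim sequences, which is minimized --- giving the packed profile --- exactly at generic classes.

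\emph{Uniqueness.} Let $M$ be any CM module of rank $2$ with this profile. Its dimension vector is $\underline{d}$, and the profile encodes a filtration $M_J \subset M$ with $M/M_J \cong M_I$. Fix a $\mathbb{C}[t]$-basis adapted to this filtration and to the two contours. Walking around the cycle, the relations of $\A_{k,n}$ together with the known dimension vector and contour shapes force every $x_i$ and $y_i$ into a normal form up to finitely many nonzero scalars --- one ``gluing parameter'' per box. The residual basis-change freedom contains $\mathrm{Aut}(M_I) \times \mathrm{Aut}(M_J) = (\mathbb{C}^\times)^2$, rescaling the two layers, together with unipotent adjustments along the cycle; using this one normalizes the three gluing parameters to $1$, and one checks the normalization closes up consistently around the cycle precisely because there are \emph{three} boxes (with fewer, either extra parameters survive --- so the profile is not the claimed one --- or the module splits off a rank-$1$ summand). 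Hence $M$ is isomorphic to the module constructed above.

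\emph{Indecomposability, and the main obstacle.} An idempotent $e \in \End(M)$ preserves the filtration data, so by the normal form it acts as a scalar $\lambda_I$ on the $M_I$-layer and $\lambda_J$ on the $M_J$-layer; commuting with each of the three bubble maps forces $\lambda_I = \lambda_J$, so $e$ is a global scalar, $e \in \{0,1\}$, and $M$ is indecomposable (in fact $\End(M) = \mathbb{C}$). Thus $M \in \mathcal{M}$. The step I expect to be the genuine difficulty is Uniqueness --- concretely, showing the three gluing parameters can be rescaled to $1$ simultaneously, equivalently that the locus of extension classes in $\Ext^1(M_I, M_J)$ whose middle term has exactly the packed three-box profile forms a single $\mathrm{Aut}(M_I) \times \mathrm{Aut}(M_J)$-orbit. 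This is a combinatorial constraint about how the relations of $\A_{k,n}$ link the three boxes around the cycle, and it is exactly here that the hypothesis ``three boxes with $\frac{U}{U}$ and $\frac{D}{D}$ interspersed'' is used; the explicit normal-form route sketched above is, I think, the cleanest way to carry it out.
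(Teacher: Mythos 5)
Your three-part architecture (existence, uniqueness, indecomposability) is reasonable and the existence step is fine, but there is a genuine gap exactly at the step you flag as the crux, and the route you propose for closing it would not work. You try to prove uniqueness using only basis changes compatible with the filtration $M_J \subset M$ --- the torus $\mathrm{Aut}(M_I)\times\mathrm{Aut}(M_J)$ together with ``unipotent adjustments,'' i.e.\ a Borel of $\GL_2$ at the two-dimensional fibers --- and you recast this as the claim that the packed-profile locus in $\Ext^1(M_I,M_J)$ is a single $\mathrm{Aut}(M_I)\times\mathrm{Aut}(M_J)$-orbit. That reformulation is false: both automorphism groups are units of a commutative local endomorphism ring and act on $\Ext^1(M_I,M_J)$ through a single scalar character, so their orbits on $\Ext^1\setminus\{0\}$ are punctured lines, whereas for a three-box profile $\Ext^1(M_I,M_J)$ has dimension at least $2$ and the packed locus is the complement of finitely many hyperplanes. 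Concretely, your three gluing parameters $(\lambda_1,\lambda_2,\lambda_3)$ can only be rescaled \emph{simultaneously} by the torus, and a Borel does not act transitively on triples of pairwise distinct lines in $\Bbbk^2$ (the cross-ratio with the preserved line is an invariant), so you cannot normalize all three to $1$ within the group you allow yourself. The point being missed is that two extension classes can have isomorphic middle terms without lying in the same orbit, because the isomorphism need not respect the filtration; uniqueness has to be proved with the full $\GL_2$ of the common two-dimensional space. The same issue infects your indecomposability step: the assertion that an idempotent of $\End(M)$ preserves the filtration $M_J\subset M$ is not justified (it would need $\Hom(M_J,M_I)$ to vanish after composing with the projection, which is not automatic for rank-$1$ CM modules).

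The paper's proof avoids all of this by a short conceptual reduction: the profile data amounts to a representation of the three-subspace ($D_4$-shaped) quiver, with one one-dimensional space per box all mapping to a common $\Bbbk^2$. There is a unique indecomposable with dimension vector $(1,1,1;2)$, namely three pairwise distinct lines, which can be normalized to $(1{:}0),(0{:}1),(1{:}1)$ by the $3$-transitivity of $\mathrm{PGL}_2$ on $\mathbb{P}^1$ --- this is precisely the use of the full $\GL_2$ that your argument lacks --- and indecomposability is immediate because an endomorphism of $\Bbbk^2$ preserving three distinct lines is a scalar. If you want to keep your normal-form language, the fix is to enlarge the admissible basis changes from the Borel to all of $\GL_2$ at the two-dimensional fibers, and only then normalize the three parameters; as written, the ``consistent closing up around the cycle'' is asserted rather than proved, and the equivalence you offer in its place is not an equivalence.
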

   
\begin{proof}

All the profiles we are considering have the following poset:
           \[\xymatrix@C=1em@R=1em{ \Bbbk  \ar^{}[dr] & \Bbbk  \ar[d] & \Bbbk  \ar[dl] \\
           & \Bbbk^2   & }\] 

Thus we are looking at three lines in $\Bbbk^2$. There is a unique, up to isomorphism, indecomposable representation of this quiver with the given dimension vectors. Up to changes of basis it is given by three different vector subspaces of dimension one, which embed into a two-dimensional vector space with maps 
$\begin{bmatrix}
               1 \\ 
               0
           \end{bmatrix}, \begin{bmatrix}
               0 \\ 1
           \end{bmatrix}, \begin{bmatrix}
               1 \\ 1
           \end{bmatrix}$ 
respectively. The three boxes we have in the profile are associated to the three different one-dimensional vector spaces.
\end{proof}

An alternative proof can be found in~\cite[Theorem 1.2]{BBL}. 

A module $M$ is called \emph{rigid} if and only if $\Ext^1(M,M)=0$. It is conjectured in~\cite[Conjecture 5.15]{BBE} that there are $2\binom{n}{6}\binom{n-6}{k-3}$ rigid indecomposable rank $2$ modules corresponding to real roots. In the same paper it is shown that $\CM(A_{k,n})$ has at most $2\binom{n}{6}\binom{n-6}{k-3}$ rigid indecomposable rank $2$ modules which correspond to real roots. In~\cite[Theorem 4.7]{BBEL}, they show that there are at least

\begin{equation}~\label{counting-mods}
N_{k,n} := \sum^k_{r=3}(\frac{2r}{3}p_1(r)+2r p_2(r)+4r p_3(r))\binom{n}{2r}\binom{n-2r}{k-r} 
\end{equation}
rigid rank $2$ indecomposable modules that correspond to roots and conjecture that this is all of them. In this formula, $p_1(r), p_2(r), p_3(r)$ are the number of partitions of $r$ into three parts which are equal, two of which are equal, or all of which are distinct, respectively. This is precisely the number of modules that we constructed. 

We also know from~\cite[Proposition 5.7]{BBE} that if $M$ is an indecomposable rank $2$ module with profile $\frac{I}{J}$ and $M$ corresponds to a real root, then $|I \cap J|=k-3$. By stretching the modules, Definition~\ref{def:stretching}, corresponding to the web in Figure~\ref{D4}, we obtain $2\binom{n}{6}\binom{n-6}{k-3}$ modules corresponding to real roots, verifying~\cite[Conjecture 5.15]{BBE}. This implies that indecomposable rank $2$ modules of $\mathcal{M}$ are rigid.

\begin{remark}~\label{rem:immanants}
    In our notation for webs, the partition corresponds to $r=a+b+c,$ where $a=|R|$, $b=|S|, c=|T|, k-r=|V|$ for a web $W(R,S,T,V).$

Note that by using immanants \cite{F22}, we can also write 

\begin{equation}
N_{k,n} := \sum^k_{r=3}2\binom{r}{3}\binom{n}{2r}\binom{n-2r}{k-r}.
\end{equation}

Here, $2\binom{r}{3}$ counts the ways of dividing $2r$ cyclically arranged elements into three sectors of even size, which gives via the immanant construction the webs in our set $\mathcal{W}.$ Note that these sectors have sizes $2a, 2b, 2c$ for a web $W(R,S,T,V)$ in the notation above.
\end{remark}

\section{Webs and Modules}~\label{sec:wm}

We give the recipe for translating webs into rank $2$ modules. Let us define a function 

\[\Psi : \mathcal{W}\mapsto \mathcal{M}\] from the set of rank $2$ webs $\mathcal{W}$ to the set of indecomposable rank $2$ modules $\mathcal{M}$ as follows.

In a rank $2$ web, we have three white vertices inside the disk and each white vertex is connected to several vertices on the boundary. As before, on the boundary, there will be $a$ vertices connected to the first white vertex, $b$ vertices connected to the second white vertex, and $c$ vertices connected to the third white vertex. There will be $d$ vertices that are attached to any two white vertex. Set $a=\alpha+\beta$, $b=\beta+\gamma$ and $c=\alpha+\gamma$ and solve for $\alpha, \beta$ and $\gamma$. Set 

\begin{enumerate}[(i)]
    \item $R_1=\{r_1,r_2,\cdots,r_{\alpha}\}$, $R_2=\{r_{\alpha+1},\cdots,r_{\alpha+\beta}\}$,
    \item $S_1=\{s_1,s_2,\cdots,s_{\beta}\}$, $S_2=\{s_{\beta+1},\cdots,s_{\beta+\gamma}\}$,
    \item $T_1=\{t_1,t_2,\cdots,t_{\gamma}\}$, and $T_2=\{t_{\gamma+1},\cdots,t_{\gamma+\alpha}\}.$
\end{enumerate}   
Here the vertices of $R$, $S$ and $T$ are listed cyclically.

Let $W=W(R,S,T,V)=W(R_1\cup R_2,S_1\cup S_2,T_1\cup T_2,V)$. Note that the $d=|V|$ and the vertices of $V$ may sit anywhere on the boundary.

Then, 

\[\Psi(W)=\frac{\mathbf{r_1}\mathbf{s_1}\mathbf{t_1}\mathbf{v}}{\mathbf{r_2}\mathbf{s_2}\mathbf{t_2}\mathbf{v}}\]

where $\mathbf{r_1}$ is just the listing of the elements of $R_1$ as $r_1r_2\cdots r_{\alpha}$, and similarly for the others. 

The resulting rank $2$ module $\frac{\mathbf{r_1}\mathbf{s_1}\mathbf{t_1}\mathbf{v}}{\mathbf{r_2}\mathbf{s_2}\mathbf{t_2}\mathbf{v}}$ consists of three boxes: $R_2 \cup S_1$, $S_2 \cup T_1$, $T_2 \cup R_1$. These are interspersed with $U/U$ steps (those indices not contained in $R, S, T$ or $V$) and $D/D$ steps (those indices contained in $V$).

Assume $d=0$, then we can visualize this map as follows.

\begin{figure}[ht]
    \centering
    \includegraphics[width=13cm]{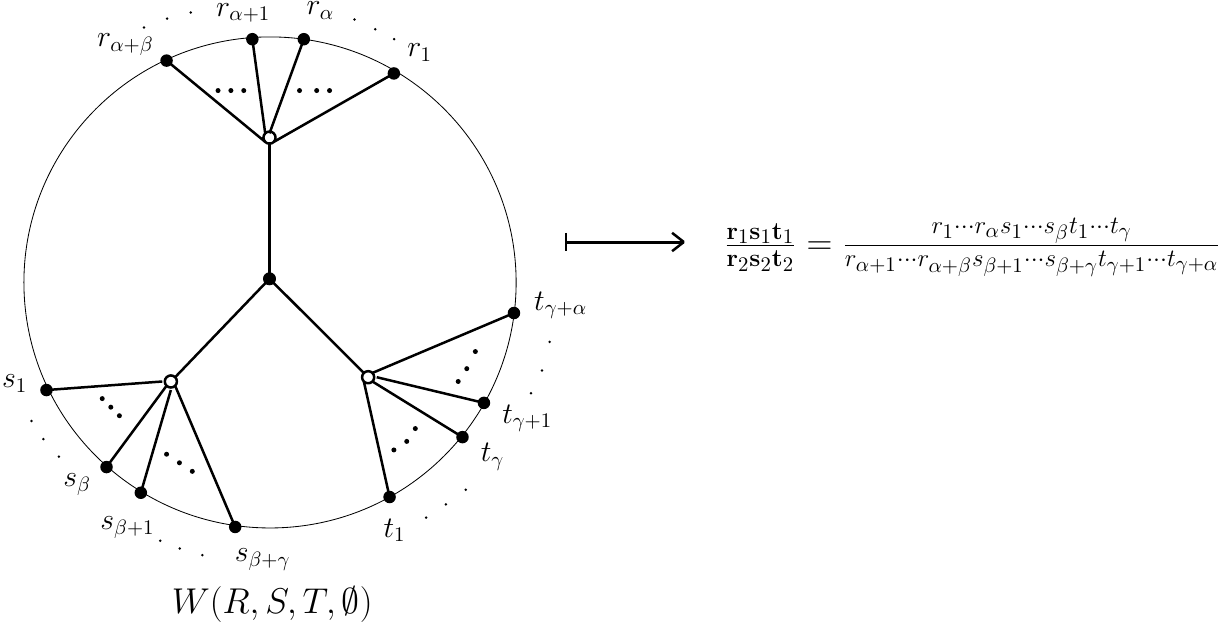}
    \caption{An illustration of a web and the corresponding module.}
    \label{web-mod}
\end{figure}

It is immediate from the definition of the function $\Psi$ that the image of $\Psi$ consists precisely of those rank $2$ modules which are three boxes (of sizes $\alpha, \beta, \gamma$) interspersed with $U/U$ and $D/D$. More precisely, the image of $\Psi$ is an indecomposable rank $2$ module.

The map $\Psi$ is well-defined, surjective and injective; an indecomposable rank $2$ module $M$ which is formed by three boxes is uniquely determined by its profile by Lemma~\ref{indecs}.

\begin{example} We provide a list of examples here.
\begin{figure}[H]
    \centering
    \includegraphics[width=9cm]{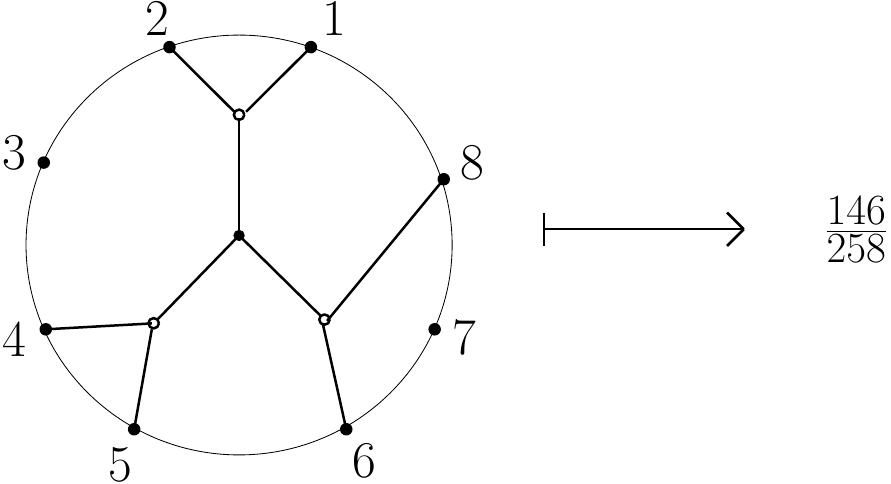}  
\end{figure}
\begin{figure}[H]
    \includegraphics[width=9cm]{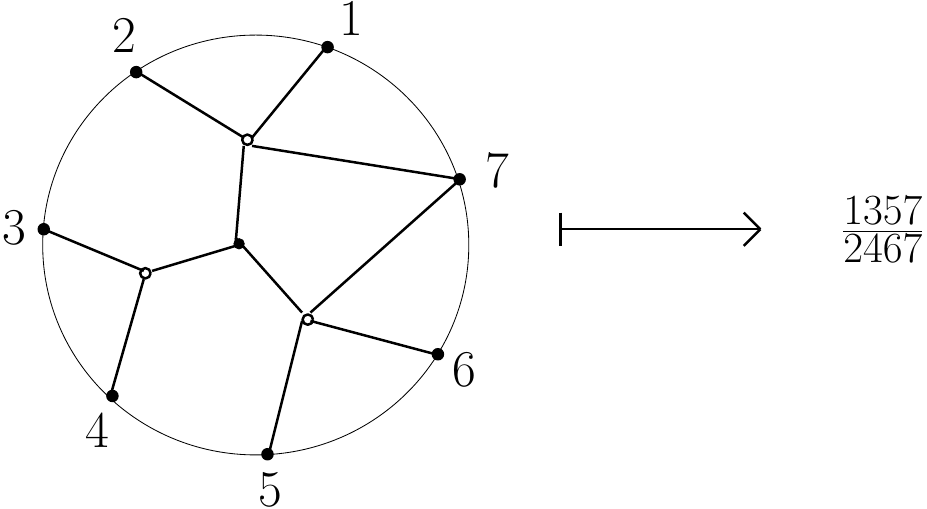}
\end{figure}
\begin{figure}[H]
    \includegraphics[width=9cm]{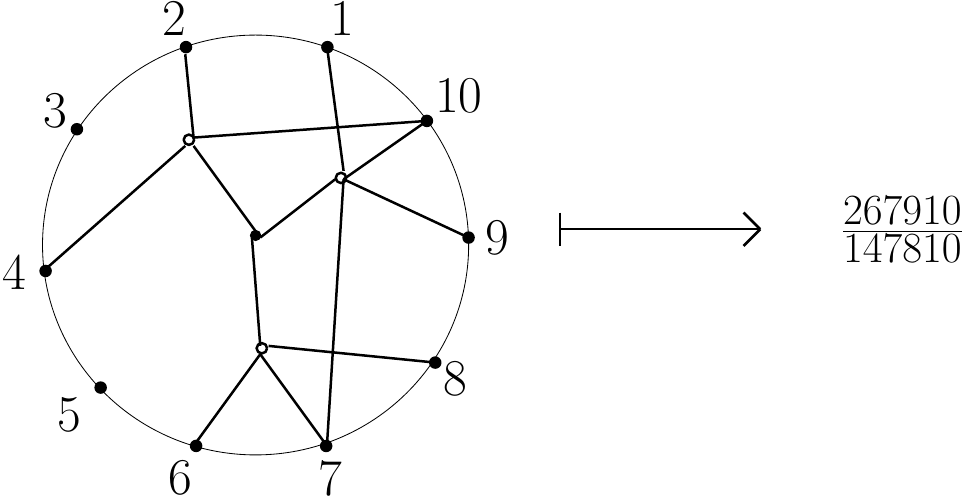}
\end{figure}
\end{example}

\begin{example}
Consider $\Gr(5,12)$. By Equation~(\ref{counting-mods}), there should be $N_{5,12}$ rigid rank $2$ indecomposable modules.

\[N_{5,12} = 2 \binom{12}{6}\binom{6}{2}+8\binom{12}{8}\binom{4}{1}+20\binom{12}{10}\binom{2}{0}.\]

Note that the count of the modules corresponding to real roots is $2 \binom{12}{6}\binom{6}{2},$ corresponding to the first term in the sum.

Now, let us count the rank $2$ webs in $\mathcal{W}$. We can analyze the possible values of $a,b,c,d$. (Recall that $a+b+c+2d=10$, and $a, b, c$ must satisfy the triangle inequality.) The first terms counts those webs with $a=b=c=d=2$. The second counts those with $a=b=3, c=2, d=1$. The third counts those with $a=4, b=c=3, d=0$ or $a=b=4, c=2, d=0$.
\end{example}

We now state the main theorem of the paper.

\begin{theorem}\label{mainthm}
If $W \in \mathcal{W}$, then $W$ is a cluster variable, and the corresponding module is $\Psi(W) \in \mathcal{M}.$ In particular, the cluster character of $\Psi(W)$ is $W.$
\end{theorem}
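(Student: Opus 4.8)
The plan is to prove the two assertions of Theorem~\ref{mainthm} in sequence: first that each $W \in \mathcal{W}$ is a cluster variable, and then that the module categorifying it is exactly $\Psi(W)$. For the first assertion I would exhibit every $W \in \mathcal{W}$ explicitly as a cluster variable. The natural strategy is to start from a single ``seed'' web --- the rank $2$ web on six boundary vertices of Figure~\ref{D4}, living in $\Gr(3,6)$, which is known to be a (non-Plücker) cluster variable in the type $D_4$ cluster structure --- and then to transport it to arbitrary $\Gr(k,n)$ using the \emph{stretching functions} of Section~\ref{sec:stretching}. The key point is that stretching a web (inserting $U/U$ and $D/D$ steps, i.e.\ increasing $d$ or padding with unused indices) corresponds on the Grassmannian side to a map of cluster algebras that sends cluster variables to cluster variables; combined with cyclic rotation and the fact that a web in $\mathcal{W}$ is determined by the cyclically separated data $(R,S,T,V)$, this shows that every $W \in \mathcal{W}$ is obtained from the $\Gr(3,6)$ seed by stretchings and rotations, hence is a cluster variable. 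This is carried out in detail in Section~\ref{sec:rank2webs}, so for the purposes of this theorem I would simply invoke it.

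For the second assertion I would use the cactus sequence machinery recalled from~\cite{L19} in Section~\ref{sec:cactusmain}. The idea is that the cactus sequence is an explicit chain of mutations connecting a Plücker-type cluster to one containing the desired cluster variable, and that along this chain one can track the modules categorifying each cluster variable. The key step is to write down, for each mutation step in the cactus sequence, the corresponding short exact sequence of $\A_{k,n}$-modules $0 \to M' \to E \to M'' \to 0$ realizing the exchange relation. Starting from rank $1$ modules (whose correspondence with Plücker coordinates is the theorem of \cite{JKS}), one then reads off inductively that the module attached to the rank $2$ cluster variable $W$ is the indecomposable rank $2$ module built as three boxes interspersed with $U/U$ and $D/D$ steps --- which is exactly $\Psi(W)$ by the description of the image of $\Psi$ given above and the uniqueness in Lemma~\ref{indecs}.

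To glue the two halves together cleanly I would argue compatibly: the stretching operations act on \emph{both} sides of $\Psi$ (on webs and on modules) in a way that commutes with $\Psi$, so it suffices to verify the module correspondence on the single $\Gr(3,6)$ seed web and then propagate it by stretching and rotation. Verifying the seed case is a finite check: in $\Gr(3,6)$ one identifies the non-Plücker cluster variable of Figure~\ref{D4} and confirms via the cluster character (or directly via the cactus sequence in this small case) that its categorifying module has profile $\tfrac{I}{J}$ with three boxes of size $1$, matching $\Psi$ of the seed web.

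The main obstacle I expect is the bookkeeping in the cactus-sequence step: one must produce the short exact sequences for \emph{every} mutation in the sequence and check exactness and indecomposability of the middle terms, keeping careful track of how the boxes in the profile evolve under each mutation. In particular, showing that the middle term $E$ decomposes correctly (so that the extracted summand is the intended rank $2$ module rather than a direct sum that collapses a box) is the delicate point; the packing/indecomposability analysis behind Lemma~\ref{indecs}, together with rank and socle/top computations for CM modules over $\A_{k,n}$, is what makes this go through.
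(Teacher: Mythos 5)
There is a genuine gap in your argument for the first assertion. You claim that every $W \in \mathcal{W}$ is obtained from the single $\Gr(3,6)$ web of Figure~\ref{D4} by stretchings and rotations, but this is false: the stretching operations $U_i$ and $D_i$ preserve the cardinalities $a=|R|$, $b=|S|$, $c=|T|$ (only $D_i$ enlarges $V$), so they never enlarge the three boxes of the web. Starting from the $\Gr(3,6)$ seed, where $a=b=c=2$, you can only reach webs with $a=b=c=2$ and arbitrary $d$ --- precisely the family of $2\binom{n}{6}\binom{n-6}{k-3}$ ``real root'' webs noted in Section~\ref{sec:mod2} --- and you miss, for instance, the webs with $a=b=3$, $c=2$, $d=1$ or $a=4$, $b=c=3$, $d=0$ on $\Gr(5,12)$. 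The paper instead reduces to the infinite family of \emph{primitive} webs on $\Gr(k,2k)$, one for each triple $(a,b,c)$ with $a+b+c=2k$ satisfying the triangle inequality, and proving that \emph{these} are cluster variables is the real content of Section~\ref{sec:rank2webs}: it goes through the map $\phi\colon \Gr(k,2k) \to \Conf_4\A$, the quasi-isomorphism with the double Bruhat cell $G^{w_0,w_0}$, and the Berenstein--Fomin--Zelevinsky description of clusters by generalized minors $\Delta_{u\omega_i,v\omega_i}$. None of that can be replaced by a finite check on one $D_4$ seed. The same defect undermines your ``gluing'' step: verifying the module correspondence only on the $\Gr(3,6)$ seed and propagating by stretching and rotation cannot reach most of $\mathcal{W}$.

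Your treatment of the second assertion is, by contrast, essentially the paper's strategy: the cactus sequence on $\Conf_3\A$ sitting inside $\Gr(k,3k-3)$, with base case the degeneracies $\tcfr{k,d}{b}{c}=\tcfr{d}{b}{c}$ identifying rank $1$ functions (Pl\"ucker coordinates, handled by \cite{JKS}), and an induction over the explicit short exact sequences of Theorem~\ref{thm:ses} realizing each mutation of Proposition~\ref{cactusmutation}, with indecomposability of the resulting three-box profile supplied by Lemma~\ref{indecs}. The point you should make explicit is that this induction must run over the \emph{entire} two-parameter family $\tcfr{a,d}{b}{c}$ produced by the cactus sequence --- not a single seed --- and only then is Lemma~\ref{stretchinglemma} invoked to transport both the functions and the modules to arbitrary $\Gr(k,n)$, covering all of $\mathcal{W}$.
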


The first part of the theorem is proven in Section~\ref{sec:rank2webs}. The second part is proved in Section~\ref{sec:cactusmain}.

\section{Stretching Functions}~\label{sec:stretching}

In this section, we will define the operations of \emph{stretching} (up and down) on webs and modules. These operations will give us ability to work with functions across different Grassmannians. 

We begin with the stretching of modules. As a first step, consider the following function $s_i: [n] \rightarrow [n+1]$:

\[s_i(j)=\left\{
  \begin{array}{lr}
    j & : j < i\\
    j+1 & : j \ge i
  \end{array}
\right.\]

\begin{definition}~\label{def:stretching}
Suppose $I \subset [n]$ with $|I|=k$, so that $I$ defines a rank $1$ module.
    \begin{enumerate}
        \item The \textbf{up stretching function} $U_i$ takes $I$ to $U_i(I)$, where $U_i(I)$ is the result of applying the map $s_i$ to all the indices of $I$. In other words, $j\in I \iff s_i(j) \in U_i(I)$ and $i \notin U_i(I)$.
        \item We also define the \textbf{down stretching function}: $D_i(I)=U_i(I)\cup \{i\}.$ 
    \end{enumerate}
\end{definition}

We would like to define stretching functions on modules. We can first define stretching on profiles by just applying stretching to each contour in the profile. For example, $U_i(\frac{I}{J}) = \frac{U_i(I)}{U_i(J)}$.

We have described the effect of $U_i$ and $D_i$ on the contours of a module. It is simple to see how to extend $U_i$ and $D_i$ to functors on categories of modules:

\begin{definition}
Consider a module $M$. Then $U_i(M)$ is the module with $(U_i(M))_j = M_j$ for $j \leq i$ and $(U_i(M))_j = M_{j-1}$ for $j > i$. The maps $x$ and $y$ are defined as before, except that $y_i$ is the identity map on $M_i$, and $x_i$ is the map given by multiplication by $t$. This is determined by the relation $y_{i+1}x_{i+1}=t=x_{i}y_{i}.$

Similarly, we define $D_i(M)$ by $(D_i(M))_j = M_j$ for $j \leq i$ and $(D_i(M))_j = M_{j-1}$ for $j > i$, but $x_i$ is the identity map, and $y_i$ is given by multiplication by $t$.    
\end{definition}

\begin{remark} Stretching was also defined in \cite{BBEL}, where $U$ and $D$ were called ``increase'' and ``co-increase.''
\end{remark}

\begin{remark} It is clear that stretching preserves indecomposability of modules. It is less obvious that it preserves rigidity and reachability. We will show later in Lemma~\ref{stretchinglemma} that stretching a reachable rigid module gives another reachable rigid module.
\end{remark}

Let us now define these up and down stretching functions for webs of rank $2$.

\begin{definition}
    \begin{enumerate}
        \item $U_i(W)=W(U_i(R), U_i(S), U_i(T), U_i(V))$ where $U_i(R), U_i(S), U_i(T), U_i(V) \subset [n+1]$.
        \item $D_i(W)=W(U_i(R), U_i(S), U_i(T), D_i(V))$ where $U_i(R), U_i(S), U_i(T), D_i(V) \subset [n+1]$.
\end{enumerate}
\end{definition}

\begin{example} Here is an example of stretching of a rank $2$ web and a rank $2$ module under the up stretching map $U_3$.

\begin{figure}[ht]
    \centering
    \includegraphics[width=10.7cm]{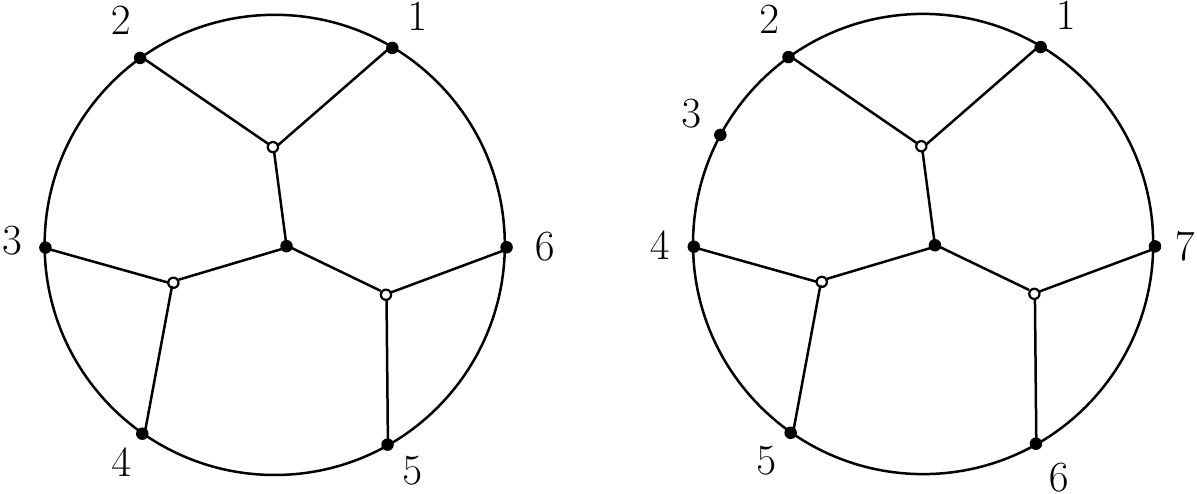}
    \caption{Stretching of a rank $2$ web by $U_3$.}
    \label{ger}
\end{figure}

\[W(\{1,2\},\{3,4\},\{5,6\},\emptyset) \longmapsto W(\{1,2\},\{4,5\},\{6,7\},\emptyset)\]

\[\frac{135}{246}\longmapsto U_3(\frac{135}{246})=\frac{U_3(135)}{U_3(246)}=\frac{146}{257}\]

\end{example}

Stretching comes from natural maps; 
$$F_i: Gr(k,n+1) \rightarrow Gr(k,n),$$
$$G_i: Gr(k+1,n+1) \rightarrow Gr(k,n).$$
If we view a point in $Gr(k,n+1)$ as a $k \times (n+1)$ matrix, the map $F_i$ is just forgetting the $i$-th column. The map $G_i$ is the composition

$$Gr(k+1,n+1) \simeq Gr(n-k,n+1) \xrightarrow{F_i} Gr(n-k,n) \simeq Gr(k,n),$$
where the isomorphisms come from duality.

We will see in Lemma~\ref{stretchinglemma} that the stretching maps $U_i$ (respectively $D_i$) are induced by pullback of functions under $F_i$ (respectively $G_i$).

The maps $F_i$ and $G_i$ are \emph{cluster fibrations}; see~\cite[Proposition 3.4 (1)]{FL19}. This means that any cluster variable on $Gr(k,n)$ pulls back under $F_i^*$ (respectively $G_i^*$) to a cluster variable on $Gr(k,n+1)$ (respectively $Gr(k+1,n+1)$).

In fact, more is true. Let $C$ be a cluster on $Gr(k,n)$ consisting of Pl\"ucker coordinates. Then pulling back under $F_i^*$ gives cluster variables on $Gr(k,n+1)$ which can be completed to a cluster $\tilde{C}$ on $Gr(k,n+1)$. Note that the variables that are added are all frozen variables, and all the arrows in the quiver for $\tilde{C}$ only connect these frozen variables with variables that were frozen in $C$. If we start mutating the cluster $C$, there is a corresponding set of mutations on the cluster $\tilde{C}$. These mutations never touch the variables frozen in $C$, so the arrows involving the new frozen variables in $\tilde{C}$ never change. Thus every cluster on $Gr(k,n)$ can be completed to one on $Gr(k,n+1)$, and the quivers for clusters on $Gr(k,n)$ are all subquivers of the corresponding clusters on $Gr(k,n+1)$. To summarize,

\begin{proposition}~\cite[Section 3]{FL19} The following is true for any cluster $C$ on $Gr(k,n)$:
\begin{itemize}
    \item[(i)] Pulling back the cluster variables in $C$ via $F_i^*$ (respectively $G_i^*$) gives cluster variables on $Gr(k,n+1)$ (respectively $Gr(k+1,n+1)$) that can be completed to a cluster $\tilde{C}$ on $Gr(k,n+1)$ (respectively $Gr(k+1,n+1)$).
    \item[(ii)] $C$ is a subquiver of $\tilde{C}$.
    \item[(iii)] The additional arrows in $\tilde{C}$ only connect frozen vertices in $\tilde{C}$ to arrows that were frozen in $C$. Thus the additional arrows are independent of the cluster $C$.    
\end{itemize}
\end{proposition}

Note that the reachable rigid indecomposable modules are in one-to-one correspondence with the cluster variables under the cluster character map~\cite{GLS, JKS}. Moreover, mutation of cluster variables corresponds to mutation of modules. We give below the properties that characterize the mutation of modules. The following lemma shows that modules corresponding to cluster variables behave well under stretching.

\begin{lemma}\label{stretchinglemma} Suppose that $M$ is an indecomposable, reachable rigid module corresponding to a cluster variable $f$. Then $U_i(M)$ and $D_i(M)$ for each $i\in [n]$ are indecomposable, reachable rigid modules corresponding to the functions $F_i^*(f)$ and $G_i^*(f)$, respectively.
\end{lemma}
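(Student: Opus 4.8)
The plan is to establish the two assertions — indecomposability/rigidity of $U_i(M)$ and $D_i(M)$, and the identification of the associated functions — separately, and to reduce everything to two observations: that $U_i$ and $D_i$ are exact equivalences onto suitable subcategories, and that the maps $F_i, G_i$ are cluster fibrations.

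First I would make precise the categorical meaning of the stretching functors. The recipe $(U_i(M))_j = M_j$ for $j \le i$, $(U_i(M))_j = M_{j-1}$ for $j>i$, with $y_i = \mathrm{id}$ and $x_i$ multiplication by $t$, should be recognized as an exact fully faithful functor $U_i \colon \CM(\A_{k,n}) \to \CM(\A_{k,n+1})$ whose essential image is the full subcategory of modules on which $y_i$ is invertible; likewise $D_i$ lands in the subcategory on which $x_i$ is invertible. (Both subcategories are closed under extensions and summands, being defined by an invertibility condition on a single structure map.) Granting this, indecomposability of $U_i(M)$ is immediate from full faithfulness: $\End(U_i(M)) \cong \End(M)$ is local. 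For rigidity one needs $\Ext^1_{\A_{k,n+1}}(U_i(M), U_i(M)) = 0$; this follows once we know $U_i$ induces an isomorphism on $\Ext^1$ between objects in its image, which is where I expect to spend the most care — it should come from the fact that $U_i$ has an exact left (or right) adjoint, namely the localization/restriction functor $F_i^*$-analogue inverting $y_i$, so that $U_i$ preserves projective resolutions up to the relevant degree, or alternatively from a direct dévissage using the rank $1$ filtration. The cleanest route is probably: show $U_i$ sends a short exact sequence of rank $1$ modules to a short exact sequence (clear from the contour description, since $U_i$ just relabels), deduce $U_i$ preserves generic filtrations, and then invoke that rigidity of an indecomposable rank $2$ module is detected on its profile via Lemma~\ref{indecs} together with the explicit realization of $\Ext^1$ in terms of boxes.

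Next I would handle the function side. By Lemma~\ref{indecs} (and the discussion after $\Psi$), the module $M$ is determined by its profile, and $U_i(M)$ has the profile obtained by inserting a $\frac{U}{U}$ step at position $i$ (and $D_i(M)$ a $\frac{D}{D}$ step); in particular $U_i(M), D_i(M) \in \mathcal{M}$ when $M \in \mathcal{M}$, so their images under $\Psi^{-1}$ are exactly $U_i(W), D_i(W)$ by construction of the web stretching maps. It then remains to check that the cluster character (cluster character formula of Geiss--Leclerc--Schröer / Jensen--King--Su) of $U_i(M)$ equals $U_i^*(f)$ and similarly for $D_i$. I would argue this by compatibility of the cluster character with the cluster fibration: the frozen/projective modules for $\A_{k,n+1}$ restrict under the equivalence onto $\{y_i \text{ invertible}\}$ to the pullbacks of those for $\A_{k,n}$, and the cluster character is built functorially from $\Hom$ and $\Ext^1$ spaces against projectives, all of which $U_i$ matches with the $\A_{k,n}$ data up to the single extra index $i$ that contributes precisely the coordinate hyperplane forgotten by $F_i$. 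Concretely, since $F_i$ is a cluster fibration, $F_i^*(f)$ is a cluster variable on $\Gr(k,n+1)$, its module is indecomposable rigid, and it has the profile of $M$ with a $\frac{U}{U}$ inserted at $i$ — but that profile is exactly $U_i(M)$'s, so by uniqueness $U_i(M)$ corresponds to $F_i^*(f) = U_i^*(f)$. The down case is identical with $G_i$ in place of $F_i$ and $\frac{D}{D}$ in place of $\frac{U}{U}$.

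The main obstacle, as flagged, is the rigidity statement: proving $\Ext^1_{\A_{k,n+1}}(U_i(M), U_i(M)) = 0$ rigorously, i.e.\ that the stretching functor neither creates self-extensions nor is merely faithful on $\Ext^1$. I would attack it via the adjoint/localization picture — $U_i$ as a section of the Serre quotient or fraction functor that inverts $y_i$ — which gives an exact right adjoint and hence control of $\Ext^1$; if that proves technical, the fallback is an explicit computation with the rank $1$ filtration of $M$, using that $U_i$ takes the filtration of $M$ by rank $1$ modules to the corresponding filtration of $U_i(M)$ and that $\Ext^1$ between rank $1$ modules is governed by their contours, which $U_i$ transports faithfully. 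Everything else — indecomposability, membership in $\mathcal{M}$, and the function identification — then follows formally from Lemma~\ref{indecs}, the construction of $\Psi$, and the cluster fibration property of $F_i$ and $G_i$.
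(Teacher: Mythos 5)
The central gap is in your identification of the function attached to $U_i(M)$. You correctly note that $F_i^*(f)$ is a cluster variable on $\Gr(k,n+1)$ (by the cluster fibration property) and that its module is therefore indecomposable and rigid, but the step ``it has the profile of $M$ with a $\frac{U}{U}$ inserted at $i$ --- so by uniqueness $U_i(M)$ corresponds to $F_i^*(f)$'' assumes exactly what is to be proven: nothing you have established identifies \emph{which} indecomposable rigid $\A_{k,n+1}$-module the cluster variable $F_i^*(f)$ corresponds to. The weight of the function constrains the class of its module in the Grothendieck group, but the correspondence between cluster variables and modules is not read off from profiles in general, and Lemma~\ref{indecs} only covers rank~$2$ three-box modules, whereas the lemma is stated for modules of arbitrary rank. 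Your fallback --- compatibility of the cluster character with $F_i^*$ --- is precisely the hard content and is asserted rather than proven; checking it would mean comparing Euler characteristics of quiver Grassmannians of $M$ and $U_i(M)$, which is at least as hard as the lemma itself. The rigidity claim has a similar status: you flag $\Ext^1_{\A_{k,n+1}}(U_i(M),U_i(M))=0$ as the main obstacle and do not close it, and note that extension-closure of the essential image of $U_i$ yields surjectivity of $\Ext^1(M,M)\to\Ext^1(U_iM,U_iM)$ only if you also prove essential surjectivity onto the subcategory where $y_i$ is invertible, which requires verifying that the relations of $\A_{k,n+1}$ contract correctly (here $x_i$ is multiplication by $t$, not an identity, so this is not automatic).

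The paper's proof takes a different and much lighter route: an induction over the exchange graph. One starts from a cluster $C$ of Pl\"ucker coordinates, where the correspondence between functions and modules is explicit and visibly compatible with $U_i$; the cluster fibration property completes $\{F_i^*(f): f\in C\}$ to a cluster $C'$ whose quiver contains that of $C$. The inductive step (Lemma~\ref{lm:strc}) is that $U_i$ carries the exchange short exact sequences governing a mutation of $\A_{k,n}$-modules to exchange sequences of $\A_{k,n+1}$-modules, which forces the module of the mutated variable $F_i^*(f')$ to be $U_i(M')$. Indecomposability and rigidity of $U_i(M)$ then come for free, since $U_i(M)$ is exhibited as the module of a cluster variable, rather than being proved directly. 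The one genuinely reusable piece of your argument is the observation that $U_i$ is exact and preserves short exact sequences; to repair the proof you should deploy that observation inside the mutation induction rather than attempting to match profiles or cluster characters directly.
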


\begin{proof}~\label{proof:behavewell}
The proofs for $U_i$ and $D_i$ are similar, so let us treat $U_i$.

Say that a cluster $C$ \textit{behaves well} under stretching if the following hold:
\begin{enumerate}
    \item If $f \in C$ corresponds to a module $M$, then $F_i^*(f)$ corresponds to the module $U_i(M)$
    \item Let $f$ and $g$ in $C$ correspond to modules $M$ and $N$. Then any arrow in the quiver between the vertices of $f$ and $g$ gives a corresponding irreducible map $\alpha: M \rightarrow N$. In this case, we have that $U_i(\alpha): U_i(M) \rightarrow U_i(N)$ is also an irreducible map.
\end{enumerate}

Start with a cluster $C$ for $Gr(k,n)$ consisting of Pl{\"u}cker coordinates, for example, any cluster coming from a plabic graph. 

For $f \in C$, we know that if $M$ is the module corresponding to $f$, then $U_i(M)$ is the module corresponding to $F_i^*(f)$. This is because $f$ was chosen to be a Pl{\"u}cker coordinate.

Moreover, one can check that the irreducible maps between the modules corresponding to cluster variables in $C$ give irreducible maps after stretching. Thus the initial cluster $C$ behaves well under stretching. 

We wish to show that the property of behaving well under stretching is stable under mutation:

\begin{lemma}~\label{lm:strc} Suppose $C$ is a cluster for $Gr(k,n)$ which behaves well under stretching as in Proof \ref{proof:behavewell}. If $f'$ is the mutation of $f$ having corresponding module $M'$, then $F_i^*(f')$ corresponds to the module $U_i(M')$. Moreover, the irreducible maps between modules for the mutation of $C$ stretch to give irreducible maps between modules for the mutation of $\tilde{C}$.
\end{lemma}

Let $T$ be the cluster tilting object for the cluster $C$. Let $\tilde{T}$ be the cluster tilting object for the cluster $\tilde{C}$.

Because $M'$ comes from $M$ via mutation, we get an exact sequence
$$M' \xrightarrow{\alpha} M'' \xrightarrow{\beta} M.$$
This exact sequence is characterized by the map $\beta$ coming from the irreducible maps in the category $\End(T)$ which are encoded in the quiver for the cluster $C$ \cite{JKS22}. This gives that this is an exact sequence of $\A_{k,n}$-modules which is moreover a non-split sequence where $\beta$ is a minimal right $\add$-$(T/M)$-approximation of $M$. This exact sequence then determines $M'$ uniquely \cite{JKS}.

Then because $C$ behaves well under stretching,
$$U_i(M') \xrightarrow{U_i(\alpha)} U_i(M'') \xrightarrow{U_i(\beta)} U_i(M)$$
is also an exact sequence where $U_i(\beta)$ consists of the appropriate irreducible maps dictated by the quiver for $\tilde{C}.$ Thus it is also a non-split sequence, and $U_i(\beta)$ is a minimal right $\add$-$(\tilde{T}/U_i(M))$-approximation of $U_i(M).$ This uniquely characterizes the module corresponding to the mutation of $F_i^*(f)$, which then must be $U_i(M')$ using the results of \cite{JKS,JKS22}, which ultimately depend on \cite{GLS}.

After mutation, let the cluster tilting objects be $T'$ and $\tilde{T}'$. Let us now show that the irreducible maps are also compatible with stretching.

The irreducible maps between the irreducible summands of $T'$ (respectively, $\tilde{T}'$) come from irreducible maps between irreducible summands of $T$ (respectively, $\tilde{T}$), compositions of such maps, or the map $\alpha$ (respectively, $U_i(\alpha)$). In all cases, we see that irreducible maps are preserved under the stretching operation.

Note that all this continues to hold if the roles of $M$ and $M'$ are switched, either by repeating the proof or using that these categories are $2$-Calabi-Yau. This completes the proof that behaving well under stretching is stable under mutation.

We now return to the proof of Lemma~\ref{stretchinglemma}. We have seen that if the compatibility between $U_i$ and $F_i^*$ holds for the functions in the cluster $C$, it holds for the functions in any mutation of $C$. Hence, by recursion, it holds for all cluster variables.
\end{proof}

\section{Rank 2 webs as cluster variables}\label{sec:rank2webs}

Here we show that the webs in $\mathcal{W}$ are in fact cluster variables. First, we consider some rank $2$ webs which give functions on $\Gr(k,2k)$.

\begin{definition}
Let $W(R,S,T,V)$ be a web with $|R|=a, |S|=b, |T|=c, |V|=0$ and $a+b+c=2k$. In other words, $R, S, T$ form a partition of $[2k]$, and the web $W(R,S,T,V)$ has exactly one leaf at each of the boundary vertices. We call such a web \emph{primitive}.
\end{definition} 

Any web in $\mathcal{W}$ comes from stretching a primitive web $W(R,S,T,V)$, see Section~\ref{sec:stretching}.

We previously saw that stretching a cluster variable on $\Gr(k,n)$ produces a cluster variable on $\Gr(k,n+1)$ or $\Gr(k+1,n+1)$. Thus we only need to show that the primitive webs are cluster variables.

In order to do this, we use that there is a natural map
$$\phi: \Gr(k,2k) \rightarrow \Conf_4 \A.$$

Here $\A = \SL_k/U$, where $U\subset SL_k$ is the maximal unipotent subgroup. $\A$ parameterizes complete flags in $\mathbb{C}^k$ with volume forms on each subspace. The map is defined as follows. Let $v_1, \dots, v_{2k}$ be the columns of the matrix representing a point of $\Gr(k,2k)$. We send this to the flags
$$F_1 := (v_1, v_2, \dots, v_{k-1}),$$
$$F_2 := (v_k, -v_{k-1}, v_{k-2} \dots, (-1)^{k-2}v_{2}),$$
$$F_3 := (v_{k+1}, v_{k+2}, \dots, v_{2k-1}),$$
$$F_4 := (v_{2k}, -v_{2k-1}, v_{2k-2} \dots, (-1)^{k-2}v_{k+2}),$$
where, for example, when we write $F_1 = (v_1, v_2, \dots, v_{k-1})$, we mean that the $i$-dimensional subspace of $F_1$ is spanned by $v_1, \dots, v_i$ and carries the volume form $v_1 \wedge \dots \wedge v_i$. Note that the vectors in $F_2$ and $F_4$ occur with alternative signs, so that the $i$-form for $F_2$ is $v_{k-i+1} \wedge \dots \wedge v_k$.

\begin{lemma} (see \cite{FL19}) The cluster structure on $\Gr(k,2k)$ can be pulled back from the cluster structure on $\Conf_4 \A$. In particular, if $f$ is a cluster variable on $\Conf_4 \A$, then $\phi^*(f)$ is a cluster variable on $\Gr(k,2k)$.
\end{lemma}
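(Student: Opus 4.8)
The plan is to deduce this from the Fock--Goncharov description of the cluster structure on $\Conf_4\A$ together with an explicit computation of $\phi^*$ on one convenient seed, and then to invoke the fact (already used above for $F_i$ and $G_i$) that a map carrying one seed to another carries all cluster variables to cluster variables. Recall that for $\A=\SL_k/U$ the space $\Conf_4\A$ carries the cluster structure attached to a triangulated square: fix the triangulation of the $4$-gon with vertices labelled $F_1,F_2,F_3,F_4$ and with the single diagonal $F_1F_3$. Its standard initial seed has $4(k-1)$ frozen cluster variables ($k-1$ on each of the four sides), $(k-1)^2$ mutable cluster variables (those on the diagonal $F_1F_3$ and strictly inside the two triangles), and the usual Fock--Goncharov quiver.

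First I would pull this initial seed back along $\phi$, using the explicit column formulas for $F_1,\dots,F_4$. Every side, diagonal, or triangle function is a generalized minor, that is, a determinant $\det\bigl(f^{(i)}_1,\dots,f^{(i)}_a,\,f^{(j)}_1,\dots,f^{(j)}_{k-a}\bigr)$ of the volume-form-compatible basis vectors of two flags (or the analogous triple determinant for a triangle function); substituting the appropriate columns $v_m$ for the $f^{(\cdot)}_\bullet$ turns each such expression, up to sign, into a single Plücker coordinate $v_{m_1}\wedge\dots\wedge v_{m_k}$ of $\Gr(k,2k)$. Carrying this out, the two sides $F_1F_2$ and $F_3F_4$ are \emph{degenerate}: all $k-1$ functions on $F_1F_2$ collapse to $\pm p_{1,2,\dots,k}$ and all $k-1$ on $F_3F_4$ to $\pm p_{k+1,\dots,2k}$, while the sides $F_2F_3$ and $F_4F_1$ produce the remaining $2(k-1)$ cyclically consecutive Plücker coordinates, and the diagonal and triangle functions produce $(k-1)^2$ further (generically non-consecutive) Plücker coordinates. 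In particular $\phi^*$ sends the whole initial seed of $\Conf_4\A$ to Plücker coordinates of $\Gr(k,2k)$, which are cluster variables there.

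Next I would identify the resulting data with one of Scott's seeds for $\mathbb{C}[\widehat{\Gr(k,2k)}]$. The $2k$ cyclically consecutive Plücker coordinates are precisely Scott's frozen variables; the $(k-1)^2$ remaining Plücker coordinates are the mutable part; and the image of the Fock--Goncharov quiver should agree with the plabic-graph (equivalently triangulation) quiver for that seed once one accounts for the frozen arrows collapsed by the two degenerate sides. The numerology is consistent: $\Conf_4\A$ has rank $(k-1)^2$ and $4(k-1)$ frozens, $\Gr(k,2k)$ has rank $(k-1)^2$ and $2k$ frozens, and $4(k-1)-2k=2(k-2)=\dim\Conf_4\A-\dim\widehat{\Gr(k,2k)}$ is exactly the dimension drop carried by the degenerate sides. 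Granting this seed identification, $\phi^*$ carries an initial seed of $\Conf_4\A$ to an initial seed of $\mathbb{C}[\widehat{\Gr(k,2k)}]$; since $\phi^*$ is a ring homomorphism it intertwines the corresponding exchange relations, and hence by recursion on mutations it carries every cluster variable of $\Conf_4\A$ to a cluster variable of $\Gr(k,2k)$, which is the assertion. (Equivalently, one observes that $\phi$ is a cluster fibration in the same sense as $F_i$ and $G_i$, which is the content of~\cite{FL19}.)

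The hard part will be the frozen bookkeeping. For $k>2$ the map $\phi$ is not dominant: it realizes $\widehat{\Gr(k,2k)}$ as the cluster subvariety of $\Conf_4\A$ cut out by setting the $k-1$ edge functions on $F_1F_2$ (and likewise on $F_3F_4$) equal to one another, and one must verify that this specialization is compatible with mutation --- i.e.\ that $\phi^*$ of an exchange relation of $\Conf_4\A$ is again a genuine exchange relation on $\Gr(k,2k)$, not merely a true polynomial identity. This compatibility is exactly the ``cluster fibration'' input. Checking the quiver isomorphism in the previous step and tracking the alternating signs in the wedge computation are routine but also need care.
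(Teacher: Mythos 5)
The paper does not actually prove this lemma---it is imported from \cite{FL19}, with only the accompanying remark that the cluster structures on $\Conf_4\A$ and $\Gr(k,2k)$ are quasi-isomorphic up to frozen variables, with the frozens on the two degenerate sides becoming identified. Your sketch correctly reconstructs exactly that argument (the seed counts $(k-1)^2$ mutable and $4(k-1)$ versus $2k$ frozen, the collapse of the $F_1F_2$ and $F_3F_4$ edge functions to $p_{1,\dots,k}$ and $p_{k+1,\dots,2k}$, and the codimension $2(k-2)$ bookkeeping all check out), so it takes the same approach as the paper; just note that the seed/quiver identification and the mutation-compatibility of the frozen specialization that you ``grant'' are precisely the content of the cited result in \cite{FL19}, not something the paper establishes independently.
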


In fact, the cluster structures on these spaces are quasi-isomorphic (isomorphic up to the frozen variables). Any cluster variable on $\Conf_4 \A$ pulls back to a cluster variable on $\Gr(k,2k)$; the only wrinkle is that some frozen variables on $\Conf_4 \A$ become identified with a single frozen variable on $\Gr(k,2k)$.

The cluster structure on $\Conf_4 \A$ is quasi-isomorphic to the one on the double Bruhat cell $G^{w_0,w_0}$ (here $w_0$ is the longest word in the Weyl group $S_k$ of the Lie group $\SL_k$). The cluster structure on $G^{w_0,w_0}$ was studied extensively by Berenstein, Fomin and Zelevinsky \cite{BFZ} and this quasi-isomorphism is folklore; it is implicit in Fock-Goncharov~\cite{FG06}, see also~\cite{LL23}.

In \cite{BFZ}, they show that cluster variables can be written down using the combinatorics of double reduced words. We consider the Weyl group $S_k \times S_k$, where we generate the first factor by simple reflections $1, 2, \dots, k-1$ and we generate the second factor by simple reflections $\overline{1}, \overline{2}, \dots, \overline{k-1}$. One chooses a double reduced word for $(w_0,w_0)$, which is a word in the alphabet $1, 2, \dots, k-1, \overline{1}, \overline{2}, \dots, \overline{k-1}$. Corresponding to any such word is a cluster:

\begin{theorem}~\cite{BFZ} Fix any reduced word for $(w_0,w_0) \in S_k \times S_k$. Consider any partial word formed by the first $N$ letters of the double reduced word. The product of this partial word gives $(u,v) \in S_k \times S_k.$ Then there exists a cluster for $G^{w_0,w_0}$ consisting of generalized minors $\Delta_{u\omega_i,v\omega_i}$ where $(u,v)$ runs over such products and $\omega_i$ runs over fundamental weights.
\end{theorem}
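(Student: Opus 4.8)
The plan is to follow Berenstein--Fomin--Zelevinsky \cite{BFZ}, whose argument identifies $\mathbb{C}[G^{w_0,w_0}]$, after inverting the frozen variables, with a cluster algebra whose seeds are precisely the ones described here. First I would make one such seed completely explicit. Fix a double reduced word $\mathbf{i}=(i_1,\dots,i_m)$ for $(w_0,w_0)$, with $m=2\ell(w_0)=k(k-1)$ and letters in $\{1,\dots,k-1,\overline{1},\dots,\overline{k-1}\}$. To position $l$ attach the generalized minor $\Delta(l;\mathbf{i}):=\Delta_{u(l)\,\omega_{|i_l|},\,v(l)\,\omega_{|i_l|}}$, where $(u(l),v(l))$ is the pair of Weyl group elements obtained from the appropriately read prefix of $\mathbf{i}$ up to $l$; together with the $k-1$ ``leading'' minors $\Delta_{\omega_i,\omega_i}$ (the length-$0$ prefix) this produces $m+(k-1)=\dim G^{w_0,w_0}=k^2-1$ functions, of which $2(k-1)$ are declared frozen (the $k-1$ leading minors together with the minor at the rightmost position of each value $1,\dots,k-1$). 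The exchange matrix $\widetilde{B}(\mathbf{i})$ is the one built combinatorially from the $\pm$-pattern of $\mathbf{i}$. I would record the two easy facts needed later: every $\Delta(l;\mathbf{i})$ is regular on $G^{w_0,w_0}$, and the frozen ones are nowhere vanishing there.

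The technical core is the family of three-term \emph{generalized determinantal identities} for minors. Writing $\Delta_{u\omega_i,v\omega_i}(g)=\Delta_{\omega_i,\omega_i}(\overline{u}^{\,-1}g\,\overline{v})$ as a matrix coefficient of the highest weight vector in the fundamental representation $V_{\omega_i}$, and decomposing $V_{\omega_i}\otimes V_{\omega_i}$ into irreducible summands, one obtains
\[
\Delta_{u\omega_i,v\omega_i}\,\Delta_{us_i\omega_i,vs_i\omega_i}
=\Delta_{us_i\omega_i,v\omega_i}\,\Delta_{u\omega_i,vs_i\omega_i}
+\prod_{j\neq i}\Delta_{u\omega_j,v\omega_j}^{-a_{ji}},
\]
valid whenever $\ell(us_i)>\ell(u)$ and $\ell(vs_i)>\ell(v)$, together with the companion commutation relations among minors. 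Matching the monomials occurring here against the columns of $\widetilde{B}(\mathbf{i})$ shows these are exactly the exchange relations of the seed, and that mutating in the direction of position $l$ replaces $\Delta(l;\mathbf{i})$ by $\Delta(l;\mathbf{i}')$ for the double reduced word $\mathbf{i}'$ obtained from $\mathbf{i}$ by the corresponding elementary transformation (a same-color commutation, a braid move $s_is_js_i=s_js_is_j$, or swapping an adjacent barred and unbarred letter). Since any two double reduced words for $(w_0,w_0)$ are linked by a chain of such transformations, and each transformation changes the seed by a relabeling of indices together with at most one mutation, all the seeds in the statement are mutation-equivalent; so it suffices to prove the claim for the one word $\mathbf{i}$.

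For that final reduction I would use the Chevalley factorization chart attached to $\mathbf{i}$: the map $H\times(\mathbb{C}^{\times})^m\to G^{w_0,w_0}$ sending $(h,t_1,\dots,t_m)$ to $h\prod_l x_{i_l}(t_l)$ (with $x_{\overline{j}}$ a lower-triangular Chevalley generator) is a birational isomorphism, and in these coordinates each $\Delta(l;\mathbf{i})$ is a Laurent monomial times a unit. Hence the $\Delta(l;\mathbf{i})$ are algebraically independent and their total localization is the coordinate ring of the torus. One then invokes the BFZ ``upper bound'' theorem: the upper cluster algebra $\overline{\mathcal{A}}(\widetilde{B}(\mathbf{i}))$ is caught between the lower bound and this localization, and for a double Bruhat seed it equals $\mathbb{C}[G^{w_0,w_0}]$ localized at the frozen minors; since here the cluster algebra and the upper cluster algebra coincide, the minors $\Delta(l;\mathbf{i})$ genuinely form a cluster. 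Pulling this structure back along $\phi$ and the quasi-isomorphism $\Conf_4\A\simeq G^{w_0,w_0}$ is then what feeds into showing the primitive webs are cluster variables.

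The main obstacle is this last step: proving that a single seed together with all its iterated mutations recovers the whole coordinate ring up to inverting frozens, rather than some proper subring or an over-localization. That is the heart of \cite{BFZ} and needs the codimension-one / upper-bound analysis; the accompanying bookkeeping --- verifying that $\widetilde{B}(\mathbf{i})$ is exactly the exchange matrix forced by the three-term identities, with all sign and indexing conventions consistent --- is the delicate part.
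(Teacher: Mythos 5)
This statement is imported verbatim from Berenstein--Fomin--Zelevinsky: the paper gives no proof of it at all (it is stated with a citation and used as a black box to feed into Theorem~\ref{thm:web-cluster}), so there is no in-paper argument to compare yours against. What you have written is a faithful condensation of the actual BFZ proof --- seeds indexed by double reduced words with the correct count $m+(k-1)=k^2-1$ and $2(k-1)$ frozen minors, the three-term generalized determinantal identity as the exchange relation, mutation-connectedness via elementary moves on double reduced words, and the factorization chart plus the upper-bound theorem to pin down the algebra --- and as an outline it is correct. Two caveats. First, your final step overclaims: BFZ prove $\overline{\mathcal{A}}(\widetilde{B}(\mathbf{i}))=\mathbb{C}[G^{w_0,w_0}][\Delta_{\mathrm{frozen}}^{-1}]$, but the equality of the cluster algebra with the upper cluster algebra for double Bruhat cells is \emph{not} in \cite{BFZ} (it came much later, via Goodearl--Yakimov); fortunately the statement being proved only asserts that the minors $\Delta_{u\omega_i,v\omega_i}$ form a cluster, which needs the seeds to be well defined and related by mutation, not $\mathcal{A}=\overline{\mathcal{A}}$. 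Second, in the factorization chart the minors $\Delta(l;\mathbf{i})$ are not themselves Laurent monomials in the $t_l$; rather the $t_l$ are recovered as Laurent monomials in the (twisted) minors, which is what gives algebraic independence --- the direction of that implication should be stated the right way around. Neither issue affects the substance, and for the purposes of this paper the theorem is legitimately used as a quoted result.
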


In \cite{LL23}, Luo and the first author analyze the corresponding clusters on $\Conf_4 SL_k$. Any cluster variable on $\Conf_4 \A$ has an associated quadruple of weights $(\kappa, \lambda, \mu, \nu)$, which are determined by how the function transforms under the natural actions of the diagonal torus $T$ on the flags $F_1, F_2, F_3, F_4$. For example, a function of weight $\omega_i$ in the flag $F_j$ depends on the $i$-form on the $i$-dimensional space of $F_j$.

\begin{proposition}~\cite{LL23} There is a natural map $\tilde{\phi}: G^{w_0,w_0} \rightarrow \Conf_4 \A$ such that if $f$ is a cluster variable on  $\Conf_4 \A$, then $\tilde{\phi}^*(f)$ is a cluster variable on $G^{w_0,w_0}$. There are cluster variables $f_{u\omega_i,v\omega_i}$ such that $\Delta_{u\omega_i,v\omega_i} = \tilde{\phi}^*(f_{u\omega_i,v\omega_i})$.
\end{proposition}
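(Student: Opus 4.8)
The plan is to follow the strategy behind \cite{BFZ} and \cite{LL23}: construct $\tilde\phi$ explicitly, exhibit one seed on $\Conf_4\A$ whose $\tilde\phi$-pullback is a Berenstein--Fomin--Zelevinsky seed on $G^{w_0,w_0}$ (up to frozen variables), and then propagate to all cluster variables by mutation. To build $\tilde\phi$, fix a base decorated flag $A^\circ\in\A$ (with stabilizer $U^+$) and a lift $\overline{w_0}\in\SL_k$ of $w_0$, and for $g\in G^{w_0,w_0}$ set $\tilde\phi(g)=(g\cdot A^\circ,\ g\overline{w_0}\cdot A^\circ,\ \overline{w_0}\cdot A^\circ,\ A^\circ)$. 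One checks that the defining conditions of $G^{w_0,w_0}=B^+w_0B^+\cap B^-w_0B^-$ are exactly the general-position conditions needed for this quadruple to define a point of the locus of $\Conf_4\A$ on which the cluster structure lives: $(g\overline{w_0}\cdot A^\circ,\overline{w_0}\cdot A^\circ)$ is generic iff $g\in B^+w_0B^+$, $(g\cdot A^\circ,A^\circ)$ is generic iff $g\in B^-w_0B^-$, and the remaining pairs come for free. Since the two cluster structures are only quasi-isomorphic, $\tilde\phi$ is not an isomorphism --- it has positive-dimensional fibers --- so it should be a cluster fibration in the sense of \cite{FL19}, and all statements below are read modulo frozen variables.

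Next I would identify the pullbacks of the relevant cluster variables on $\Conf_4\A$ with generalized minors. Every cluster variable on $\Conf_4\A$ is a $T$-eigenfunction for the torus action on each of the four flags, so it carries a weight quadruple; let $f_{u\omega_i,v\omega_i}$ be the one whose weights in the two flags into which $g$ enters are $u\omega_i$ and $v\omega_i$, and which is trivial in the two reference flags. Writing $f_{u\omega_i,v\omega_i}$ as an explicit contraction of the decorating volume forms of the four flags and substituting $\tilde\phi(g)$ turns it into a pairing of an extremal weight vector of weight $u\omega_i$ against one of weight $v\omega_i$ twisted by $g$, which, up to a sign fixed by the lift $\overline{w_0}$, is $\Delta_{u\omega_i,v\omega_i}(g)$. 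This is linear algebra; the only delicate point is the bookkeeping of the lift $\overline{w_0}$ and the signs, of exactly the kind already visible in the alternating signs of the flags $F_2,F_4$ in the definition of $\phi$.

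Finally I would propagate by mutation. Fix a double reduced word $\mathbf i$ for $(w_0,w_0)$; by \cite{BFZ} its prefix products $(u,v)$ give a seed on $G^{w_0,w_0}$ with cluster variables $\Delta_{u\omega_i,v\omega_i}$, and the corresponding configuration data give a seed on $\Conf_4\A$ with cluster variables $f_{u\omega_i,v\omega_i}$; by the previous step $\tilde\phi^*$ carries the latter seed to the former up to frozen variables, so in particular the two quivers agree after deleting frozen vertices. Now induct on distance in the mutation graph: if every non-frozen cluster variable of a seed on $\Conf_4\A$ pulls back to a cluster variable on $G^{w_0,w_0}$ and the two quivers agree up to frozens, then, because the frozen variables of $\Conf_4\A$ pull back to Laurent monomials in those of $G^{w_0,w_0}$, the exchange relation of any mutation on $\Conf_4\A$ pulls back to the exchange relation of the corresponding mutation on $G^{w_0,w_0}$, so the mutated cluster variable again pulls back to a cluster variable and the quivers still agree. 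This yields both assertions. The main obstacle is the handling of the ``quasi'': since $\tilde\phi$ is not an isomorphism, one must set up the cluster-ensemble / quasi-homomorphism formalism carefully enough that pullback of exchange relations and of frozen variables behaves as claimed, and one must verify the initial identification of the configuration quiver with the Berenstein--Fomin--Zelevinsky quiver; the sign issues of the second step are a secondary but genuinely fiddly point.
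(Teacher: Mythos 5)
The paper does not actually prove this proposition: it is imported verbatim from \cite{LL23}, and the surrounding text explicitly describes the underlying quasi-isomorphism between $\Conf_4\A$ and $G^{w_0,w_0}$ as folklore, implicit in Fock--Goncharov. So there is no in-paper argument to compare against; your proposal is a reconstruction of the standard proof strategy (normalize two of the four flags to a standard pair, identify the resulting functions with generalized minors, match one BFZ seed, propagate by mutation), and in outline that is the right strategy.

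Two concrete points in your write-up are off, however. First, the map you construct, $\tilde\phi(g)=(g\cdot A^\circ,\ g\overline{w_0}\cdot A^\circ,\ \overline{w_0}\cdot A^\circ,\ A^\circ)$, does \emph{not} have positive-dimensional fibers: the stabilizer of the pair $(A^\circ,\overline{w_0}A^\circ)$ is $U^+\cap U^-=\{e\}$, so $\tilde\phi$ is injective, and a dimension count ($\dim G^{w_0,w_0}=k^2-1$ versus $\dim\Conf_4\A=k^2+2k-3$) shows it is an embedding of codimension $2(k-1)$, matching the discrepancy $4(k-1)$ versus $2(k-1)$ in the number of frozen variables. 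So the relevant formalism is not that of a cluster fibration in the sense of \cite{FL19} (which is what governs $F_i$ and $G_i$ in the stretching section), but a quasi-homomorphism along a closed embedding, under which some frozens of $\Conf_4\A$ restrict to constants; this changes which compatibility you must verify before the mutation-propagation step goes through. Second, your weight bookkeeping contradicts the theorem that immediately follows this proposition in the paper: $f_{u\omega_i,v\omega_i}$ is \emph{not} trivial on the two reference flags. Its weight quadruple $(\kappa,\lambda,\mu,\nu)$ is generally nontrivial in all four flags and is determined by the decompositions $u\omega_i=\kappa+w_0\nu$ and $v\omega_{k-i}=\lambda+w_0\mu$ into dominant weights. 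The whole content of the identification $\Delta_{u\omega_i,v\omega_i}=\tilde\phi^*(f_{u\omega_i,v\omega_i})$ is precisely how a single generalized minor distributes its two extremal weights across four flags, so the ``explicit contraction'' step of your argument would have to be redone with the correct four-flag weights; as written it would identify $\Delta_{u\omega_i,v\omega_i}$ with the wrong function on $\Conf_4\A$.
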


\begin{theorem}~\cite{LL23} The function $f_{u\omega_i,v\omega_i}$ has weights $(\kappa, \lambda, \mu, \nu)$ where $\kappa, \lambda, \mu, \nu$ are the minimal dominant weights such that 
\[u\omega_i = \kappa + w_0\nu\]
\[v\omega_{k-i} = \lambda + w_0\mu\].
\end{theorem}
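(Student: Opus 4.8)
The plan is to compute the four weights of $f_{u\omega_i,v\omega_i}$ one flag at a time, using the definition of the generalized minor $\Delta_{u\omega_i,v\omega_i}$ on $G^{w_0,w_0}$ and the explicit map $\tilde{\phi}\colon G^{w_0,w_0}\to\Conf_4\A$ from the preceding Proposition. Recall that for $g\in G$ the minor $\Delta_{u\omega_i,v\omega_i}(g)$ is, up to the torus, the matrix coefficient $\langle u\omega_i\mid g\mid v\omega_i\rangle$ in the fundamental representation $V_{\omega_i}$ with highest weight $\omega_i$: it picks out the $u$-extremal weight vector on the left and the $v$-extremal weight vector on the right. Hence, as a function on $G$, it is a $T\times T$-eigenfunction of bi-weight $(u\omega_i, v\omega_i)$ under left and right translation. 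First I would translate this bi-weight statement through $\tilde{\phi}$: the two torus actions on $G^{w_0,w_0}$ that rescale $\Delta_{u\omega_i,v\omega_i}$ by characters $u\omega_i$ and $v\omega_i$ correspond, under $\tilde{\phi}$, to the torus actions on two of the four flags of $\Conf_4\A$ (say $F_1$ and, after the duality built into $\Conf_4\A$, $F_2$), while the remaining two flags $F_3,F_4$ are the ``output'' flags fixed by a standard choice of representatives. So a priori $f_{u\omega_i,v\omega_i}$ has left/right weights $u\omega_i$ and $v\omega_{k-i}$ on $F_1$ and $F_2$ (the $k-i$ coming from the duality $V_{\omega_i}^*\cong V_{\omega_{k-i}}$ that is used to define $\Conf_4\A$ from $G^{w_0,w_0}$).

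The second step is to convert these two ``extremal'' weights into the four \emph{dominant} weights $(\kappa,\lambda,\mu,\nu)$ that actually label the function. The point is that $\Conf_4\A$ is $\operatorname{Conf}$ of decorated flags, so its cluster variables transform by \emph{dominant} weights under each of the four tori — a monomial in Plücker-type coordinates of the four flags has a well-defined dominant weight in each flag slot. An extremal weight $u\omega_i$ is generally not dominant, and the mechanism by which a function carrying it appears is a ``folding'': one writes $u\omega_i$ as a dominant weight on one flag minus a $w_0$-twisted dominant weight which is absorbed, via the gluing in $\Conf_4\A$ that pairs a flag with the cyclically adjacent one, into the next flag. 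Concretely, I would use that in $\Conf_4\A$ a function of $F_1$-weight that is not dominant gets its ``defect'' pushed onto $F_4$ with a $w_0$ applied (this is exactly the content of the alternating-sign, reversed-order definition of $F_2,F_4$ in the previous section and the duality isomorphisms there). This yields the two constraints
\[
u\omega_i=\kappa+w_0\nu,\qquad v\omega_{k-i}=\lambda+w_0\mu,
\]
and the requirement that $\kappa,\lambda,\mu,\nu$ be the \emph{minimal} such dominant weights is forced by the fact that $f_{u\omega_i,v\omega_i}$ is an irreducible cluster variable, not a product — any non-minimal solution would correspond to multiplying by a frozen variable, changing the function.

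The third step is to pin down which fundamental representation sits in which slot, i.e. to justify the index $k-i$ rather than $i$ in the second equation and the pairing of $\kappa$ with $\nu$ (rather than $\mu$). This I would do by checking the extreme cases $u=v=e$ and $u=v=w_0$: for $u=v=e$ the minor is the principal minor $\Delta_{\omega_i,\omega_i}$, whose pullback to $\Conf_4\A$ is a known frozen-type invariant whose weights are $(\omega_i,\omega_{k-i},0,0)$ or a cyclic rotation thereof, and this matches $\kappa=\omega_i,\lambda=\omega_{k-i},\mu=\nu=0$ since $w_0\cdot 0=0$; for $u=v=w_0$ one gets $w_0\omega_i=-\omega_{i^*}$ on the left, forcing $\kappa=0$ and $\nu=\omega_{i}$ (using $w_0\omega_i = -\omega_{i^{*}}$ and $i^{*}=k-i$ for $\SL_k$), again consistent. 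Together with $T$-equivariance these boundary checks determine the labelling uniquely, and equivariance propagates it to all $(u,v)$.

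The main obstacle, I expect, is the bookkeeping in the second step: making the informal ``defect gets pushed to the adjacent flag with a $w_0$'' precise. This requires being careful about (a) the three duality isomorphisms $\Gr(k,2k)\cong\Gr(k,2k)$, $V_{\omega_i}^*\cong V_{\omega_{k-i}}$, and the reversal $\omega\mapsto w_0\omega\mapsto-\omega$, and (b) the exact normalization of representatives of $F_2,F_3,F_4$ chosen in defining $\tilde\phi$, since an unlucky choice shifts everything by a fixed weight and would spoil the ``minimal dominant'' statement. I would handle this by fixing once and for all the standard unipotent representatives (so that $F_2,F_4$ are the $w_0$-rotations used in Section~\ref{sec:rank2webs}) and tracking the highest-weight line in $V_{\omega_i}\otimes V_{\omega_{k-i}}^{*}$ through the map; everything else is a direct, if tedious, translation of the $\Delta_{u\omega_i,v\omega_i}$ definition, and the result is the stated pair of identities.
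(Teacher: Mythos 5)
First, a point of comparison: the paper does not prove this statement at all --- it is quoted verbatim from \cite{LL23} as an imported black box (with a remark that the normalizations differ slightly from that reference). So there is no in-paper argument to measure you against, and I am judging your sketch on its own. Your overall skeleton is the right one: generalized minors $\Delta_{u\omega_i,v\omega_i}$ are $T\times T$-eigenfunctions of bi-weight $(u\omega_i,v\omega_i)$, one transports this through $\tilde\phi$ to constrain the four flag-weights of $f_{u\omega_i,v\omega_i}$, and minimality is forced because a cluster variable is irreducible and any non-minimal solution of the two weight equations differs from the minimal one by a monomial in frozen variables.

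The genuine gap is in your second step, and it is not just bookkeeping. You assert that the left and right torus translations on $G^{w_0,w_0}$ correspond to the torus actions on $F_1$ and $F_2$, ``while the remaining two flags $F_3,F_4$ are the output flags fixed by a standard choice of representatives.'' If $F_3$ and $F_4$ were genuinely fixed, equivariance would force $\mu=\nu=0$, contradicting the statement you are trying to prove; your subsequent ``the defect gets pushed onto the adjacent flag with a $w_0$'' is a patch for this, but as written it is not a mechanism --- nothing in the argument explains why a non-dominant $F_1$-weight must be re-expressed at the cost of a $w_0$-twisted weight on $F_4$ rather than simply being impossible. The correct mechanism, which your sketch never states, is that under a map of the shape $g\mapsto(\text{std.\ flag},\,\text{opp.\ flag},\,g\cdot(\text{std.\ flag}),\,g\cdot(\text{opp.\ flag}))$ each torus translation of $g$ moves a \emph{pair} of decorated flags simultaneously, acting on one decoration by the character $\chi$ and on the other (the $\overline{w_0}$-translated one) by $w_0\chi$; the eigenvalue of $\Delta_{u\omega_i,v\omega_i}$ under that translation is therefore the \emph{sum} $\kappa+w_0\nu$ (resp.\ $\lambda+w_0\mu$) of the weights in the two moved slots, which is exactly the displayed identity with no folding needed. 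Without this, the two equations are not derived but reverse-engineered. Secondarily, your minimality argument needs the explicit weights of the frozen variables (the adjacent-pair invariants of weight $(\omega_j,0,0,\omega_{j^*})$ and its cyclic analogues) to turn ``non-minimal $=$ divisible by a frozen'' into a proof, and your boundary checks at $u=v=e$ and $u=v=w_0$ are consistency tests rather than a determination of the slot-labelling, which is fixed by the definition of $\tilde\phi$ itself.
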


Note that these formulas are slightly different from those at the end of Section 6 of \cite{LL23}; this is because our conventions and normalizations are slightly different.

\begin{theorem}~\label{thm:web-cluster}
Any primitive web in $\mathcal{W}$ is a cluster variable.
\end{theorem}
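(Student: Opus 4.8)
The plan is to carry the primitive web function along the map $\phi\colon\Gr(k,2k)\to\Conf_4\A$ and recognize it as $\phi^*(f)$ for one of the cluster variables $f=f_{u\omega_i,v\omega_i}$ on $\Conf_4\A$ supplied by \cite{LL23} (the ones corresponding to the generalized minors $\Delta_{u\omega_i,v\omega_i}$ of \cite{BFZ}); by the Lemma of \cite{FL19} quoted above, $\phi^*(f)$ is then a cluster variable on $\Gr(k,2k)$. Two reductions come first. Since the cyclic rotation of $[2k]$ is a quasi-cluster automorphism of $\Gr(k,2k)$, being a cluster variable is invariant under relabeling the boundary, so I may rotate at will. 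And for a genuine (connected) primitive web, the sizes $a=|R|$, $b=|S|$, $c=|T|$ are each at most $k-1$: if, say, $a=k$, then the internal edge at the $R$-vertex has weight $k-a=0$ and the web is a disjoint union of lower-rank webs. In particular $a+c=2k-b\ge k+1$, and likewise for the other two sums.

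First I would rotate so that $R=\{1,\dots,a\}$, $S=\{a+1,\dots,2k-c\}$, $T=\{2k-c+1,\dots,2k\}$. Because $a\le k-1$ and $a+c\ge k+1$, the arc $R$ is an initial segment of the columns $\{1,\dots,k-1\}$ that build $F_1$; $T$ is a terminal segment of the columns $\{k+2,\dots,2k\}$ that build $F_4$; and $S$ splits as $\{a+1,\dots,k\}\sqcup\{k+1,\dots,2k-c\}$, a terminal segment of the columns $\{2,\dots,k\}$ of $F_2$ together with an initial segment of the columns $\{k+1,\dots,2k-1\}$ of $F_3$. Writing the primitive web as the contraction, through its trivalent black vertex, of the decomposable tensors $v_R=\bigwedge_{r\in R}v_r$, $v_S$, $v_T$ produced at the three white vertices, the above shows --- once one accounts for the alternating signs with which $\phi$ lists the vectors of $F_2$ and $F_4$ --- that $v_R$ is the $a$-form of the decorated flag $F_1$, that $v_T$ is the $c$-form of $F_4$, and that $v_S$ is the wedge of the $(k-a)$-form of $F_2$ with the $(k-c)$-form of $F_3$. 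Hence $W$ depends on a point of $\Gr(k,2k)$ only through its image in $\Conf_4\A$: $W=\phi^*(g)$, where $g$ is this contraction of flag data, of weight quadruple $(\kappa,\lambda,\mu,\nu)=(\omega_a,\omega_{k-a},\omega_{k-c},\omega_c)$.

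Next I would match $g$ to the combinatorics of \cite{BFZ,LL23}. Put $i:=a+c-k=k-b$, so $1\le i\le k-2$. In standard $\mathfrak{gl}_k$-coordinates, where $\omega_j=L_1+\cdots+L_j$ and $w_0\omega_j=-\omega_{k-j}$, one computes $\kappa+w_0\nu=\omega_a-\omega_{k-c}=L_{k-c+1}+\cdots+L_a$, a sum of $i$ of the $L_j$'s, hence $u\omega_i$ for a suitable $u\in S_k$; and $\lambda+w_0\mu=\omega_{k-a}-\omega_c=-(L_{k-a+1}+\cdots+L_c)$, a sum of $k-i$ of the $L_j$'s, hence $v\omega_{k-i}$ for a suitable $v\in S_k$ (recall $\omega_{k-i}=\omega_b$). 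One checks that $(\omega_a,\omega_c)$ and $(\omega_{k-a},\omega_{k-c})$ are the minimal dominant pairs solving these two equations, so that by \cite{LL23} the cluster variable $f_{u\omega_i,v\omega_i}$ has exactly the weight quadruple of $g$; and one checks that $(u,v)$ occurs as the product of an initial segment of a double reduced word for $(w_0,w_0)$ --- interleave reduced words for $u$ and for $v$, and extend each to a reduced word for $w_0$ --- so that $\Delta_{u\omega_i,v\omega_i}$, equivalently $f_{u\omega_i,v\omega_i}$, really is a cluster variable.

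The hard part will be to promote the equality of weight quadruples to an equality $g=c\,f_{u\omega_i,v\omega_i}$ of functions (with $c$ a nonzero scalar): the weight summand $(V_{\omega_a}\otimes V_{\omega_{k-a}}\otimes V_{\omega_{k-c}}\otimes V_{\omega_c})^{\SL_k}$ is usually more than one-dimensional --- already two-dimensional for $\Gr(3,6)$, spanned by the non-Pl\"ucker cluster variable and a product of two Pl\"ucker coordinates --- and $\phi^*$ is injective on it, so nothing here is formal. The way I would argue is that $g$, coming from a \emph{connected} web, is a prime element of the invariant ring: it is the invariant selected by the ``Cartan'' (top) summand that the fixed routing of the web's tree picks out inside $V_{\omega_a}\otimes V_{\omega_{k-a}}$, and this is precisely the invariant carried by the generalized minor $f_{u\omega_i,v\omega_i}$ (alternatively, one tracks $g$ explicitly through the isomorphism $\Conf_4\A\simeq G^{w_0,w_0}$ and identifies it with the minor directly); the remaining basis vectors of the weight summand are products of generalized minors, i.e.\ cluster monomials, excluded because the web is connected. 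Granting this, $W=\phi^*(g)=c\,\phi^*(f_{u\omega_i,v\omega_i})$ is a cluster variable on $\Gr(k,2k)$. I expect this last identification --- reconciling the diagrammatics of webs with the generalized-minor picture of the double Bruhat cell --- to be the genuine obstacle; the minimality claim for $(\omega_a,\omega_c)$ and $(\omega_{k-a},\omega_{k-c})$ in the preceding step, though needed, should be routine.
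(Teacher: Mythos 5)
Your proposal follows the same route as the paper's proof: rotate so that $R$, $S$, $T$ sit against the four flags of $\phi\colon\Gr(k,2k)\to\Conf_4\A$, read off the weight quadruple $(\omega_a,\omega_{k-a},\omega_{k-c},\omega_c)$, match it with the generalized minor $f_{u\omega_i,v\omega_i}$ for $i=a+c-k$ via \cite{BFZ} and \cite{LL23}, and pull back along the cluster fibration of \cite{FL19}. Your preparatory steps --- the reduction to $a,b,c\le k-1$ for connected webs, the computation $u\omega_i=\omega_a+w_0\omega_c$ and $v\omega_{k-i}=\omega_{k-a}+w_0\omega_{k-c}$, the minimality check, and the existence of a double reduced word realizing $(u,v)$ as a prefix --- are exactly what the paper does, spelled out in somewhat more detail.

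The one point of divergence is the step you flag as ``the genuine obstacle,'' and you are right to flag it. The paper disposes of it with the sentence ``invariant theory tells us that there is a unique invariant $t$ of these forms (up to scaling).'' Taken literally this is not correct: the space of multilinear invariants of the four forms is $(\Lambda^{a}V\otimes\Lambda^{k-a}V\otimes\Lambda^{k-c}V\otimes\Lambda^{c}V)^{\SL_k}$ with $V=\mathbb{C}^k$, which has dimension $\min(a,k-a,c,k-c)+1$ --- already $2$ for $\Gr(3,6)$, as you observe, where the web is a particular linear combination of two products of Pl\"ucker coordinates. So equality of weight quadruples does not by itself identify the primitive web with $f_{u\omega_i,v\omega_i}$; an additional argument is genuinely required, whether your primality-of-connected-webs idea or an explicit computation of the generalized minor through the identification of $\Conf_4\A$ with $G^{w_0,w_0}$. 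Your write-up leaves that argument as a sketch, so as it stands it has a gap --- but it is precisely the gap the paper itself glosses over, and your version at least names the issue and proposes two plausible ways to close it. Carrying out either one (the $G^{w_0,w_0}$ computation is probably the more mechanical) would yield a complete proof, and a more careful one than the published argument.
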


\begin{proof}

Consider any web $W(R,S,T,V)$ with $|R|=a, |S|=b, |T|=c, |V|=0$ and $a+b+c=2k$, $a, b, c < k$.

Let $i=a+c-k$. Note that $b=(k-a) + (k-c)$. Now, let us choose a double reduced word which contains as a partial word some permutation $(u,v)$ such that 
$$u\omega_i = 
(\overbrace{0, \cdots, 0}^{k-c}, \overbrace{1, \cdots, 1}^{a+c-k} , \overbrace{0, \cdots, 0}^{k-a} ) = 
\omega_a + w_0\omega_c,$$

$$v\omega_{k-1} = (  \overbrace{1, \cdots, 1}^{k-c} , \overbrace{0, \cdots, 0}^{a+c-k} , \overbrace{1, \cdots, 1}^{k-c}
) = \omega_{k-a} + w_0 \omega_{k-c}.$$
There are many choices of reduced word that give rise to such $u$ and $v$.

Then the function $f_{u\omega_i,v\omega_i}$ depends on $a$-form $v_1 \wedge \dots \wedge v_a$ from $F_1$, the $(k-a)$-form $v_{a+1} \wedge \dots \wedge v_k$ from $F_2$, the $(k-c)$-form $v_{k+1} \wedge \dots \wedge v_{2k-c}$ from $F_3$, and the $c$-form $v_{2k-c+1} \wedge \dots \wedge v_{2k}$ from $F_4$:

\[
\xymatrix{ v_1 \wedge \dots \wedge v_a \in F_1 \ar@{-}[r] & v_{2k-c+1} \wedge \dots \wedge v_{2k} \in F_4 \ar@{-}[d] \\
           v_{a+1} \wedge \dots \wedge v_k \in F_2 \ar@{-}[u] & v_{k+1} \wedge \dots \wedge v_{2k-c} \in F_3 \ar@{-}[l] }
\]

Invariant theory tells us that there is a unique invariant $t$ of these forms (up to scaling), which is given by the web in Figure~\ref{fig:flag_web}.

This web is a cluster variable on $\Conf_4 \A$ which pulls back to a cluster variable on $\Gr(k,2k)$ given by the web:

\begin{figure}[H]
    \centering
    \includegraphics[scale=0.9]{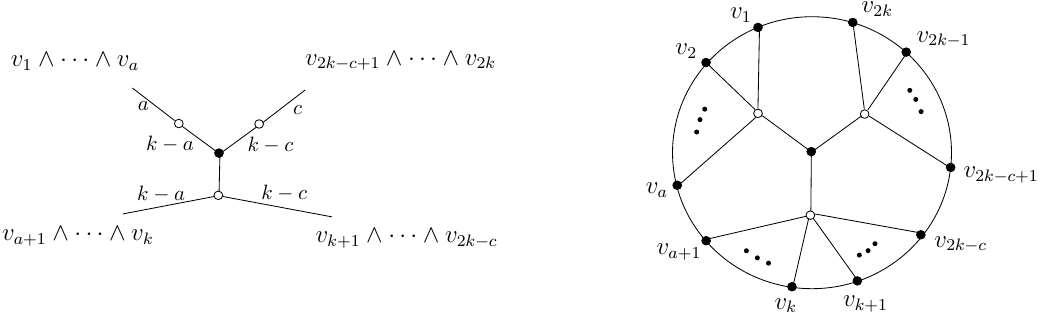}
    \caption{The corresponding web to the invariant $t$.}
    \label{fig:flag_web}
\end{figure}

Up to rotation (which is a symmetry of the cluster structure), and by varying $a, b, c$, we have exhibited all the primitive webs on $\Gr(k,2k)$ as cluster variables.

\end{proof}

\begin{corollary}~\label{cor:allGrassm}
    Any web in $W \in \mathcal{W}$ is a cluster variable.
\end{corollary}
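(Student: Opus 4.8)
The plan is to combine the two results that have just been established: Theorem~\ref{thm:web-cluster}, which shows every \emph{primitive} web in $\mathcal{W}$ is a cluster variable on $\Gr(k,2k)$, and Lemma~\ref{stretchinglemma} together with the fact (recorded after it) that the maps $F_i$ and $G_i$ are cluster fibrations. So the proof is essentially a bookkeeping argument: reduce an arbitrary web in $\mathcal{W}$ to a primitive one by a sequence of stretchings, and propagate the ``cluster variable'' property back up.

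First I would recall from Section~\ref{sec:rank2webs} the observation that any web $W(R,S,T,V)\in\mathcal{W}$ on $\Gr(k,n)$ is obtained from a primitive web by repeatedly applying up- and down-stretching functions $U_i$ and $D_i$. Concretely, starting from $W(R,S,T,V)$ with $|R|=a,|S|=b,|T|=c,|V|=d$ and $a+b+c+2d=2k$: each index not lying in $R\cup S\cup T\cup V$ is a $U/U$ step and can be removed by inverting an up-stretching $U_i$ (equivalently, $W$ is $U_i$ applied to a web on one fewer boundary vertex); each index in $V$ is a $D/D$ step and can be removed by inverting a down-stretching $D_i$, which also drops $k$ by one. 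After deleting all $U/U$ and $D/D$ steps we are left with exactly the three boxes, i.e.\ a web $W(R,S,T,\emptyset)$ on $\Gr(k',2k')$ with $2k' = a+b+c$, which is primitive. Thus $W = \Phi_m \circ \cdots \circ \Phi_1 (W_{\mathrm{prim}})$ where each $\Phi_j$ is some $U_i$ or $D_i$ and $W_{\mathrm{prim}}$ is primitive.

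Next, by Theorem~\ref{thm:web-cluster}, $W_{\mathrm{prim}}$ is a cluster variable on $\Gr(k',2k')$. I would then argue by induction on $m$, the number of stretching steps. For the inductive step: suppose $W' = \Phi_1(W_{\mathrm{prim}})$ after some steps is a cluster variable on $\Gr(k'',n'')$. If the next step is $U_i$, then $U_i(W')$ is the pullback $F_i^*$ of the function for $W'$ under the cluster fibration $F_i:\Gr(k'',n''+1)\to\Gr(k'',n'')$ — here I need to note that stretching of webs is compatible with pullback along $F_i$ (and $D_i$ with $G_i$), which is exactly the content behind the definitions of $U_i(W)$ and $D_i(W)$ and the discussion ``Stretching comes from natural maps''; the pullback of a cluster variable along a cluster fibration is a cluster variable. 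Similarly if the next step is $D_i$, use the cluster fibration $G_i:\Gr(k''+1,n''+1)\to\Gr(k'',n'')$. Either way $\Phi_{j+1}(W')$ is a cluster variable, completing the induction, and hence $W$ is a cluster variable.

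The main obstacle I anticipate is not conceptual but one of precision: verifying that the web-level stretching operations $U_i, D_i$ as defined in Section~\ref{sec:stretching} really do correspond, at the level of functions on the Grassmannian, to pullback along $F_i$ and $G_i$ respectively — i.e.\ that the function attached to $U_i(W)$ equals $F_i^*$ of the function attached to $W$, and likewise for $D_i$ and $G_i^*$. This requires checking that forgetting a column (for $F_i$) on the matrix side matches deleting a $U/U$ leaf on the web side, and that the duality-twisted column forgetting ($G_i$) matches inserting a $D/D$ step; this is where the signs and the duality isomorphism $\Gr(k+1,n+1)\simeq\Gr(n-k,n+1)$ enter, and it must be done carefully. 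Once that compatibility is in hand, everything else is a routine induction, and the corollary follows.
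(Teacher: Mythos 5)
Your proof is correct and follows essentially the same route as the paper: decompose an arbitrary web in $\mathcal{W}$ as a sequence of stretchings of a primitive web, invoke Theorem~\ref{thm:web-cluster} for the primitive case, and propagate the cluster-variable property through the cluster fibrations $F_i$ and $G_i$. The paper treats the corollary as immediate from these two ingredients; your extra attention to the compatibility of web-level stretching with $F_i^*$ and $G_i^*$ is a fair point of precision but not a different argument.
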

   
Therefore, the modules $\Psi(W)$ for $W \in \mathcal{W}$ are all reachable. It is an open question whether all rigid indecomposable modules are reachable via mutation. Thus Corollary~\ref{cor:allGrassm} suggests the possibility that all rigid indecomposable modules are reachable by mutation.

\section{Cactus sequence and Main Result}\label{sec:cactusmain}

The cactus sequence is a specific sequence of mutations in the space of configurations of three flags $\Conf_3 \mathcal{A}$ which was studied in \cite[Subsection 3.3]{L19}. It is a maximal green sequence on $\Conf_3 \mathcal{A}$, and its tropicalization induces the Sch\"utzenberger involution. Thus it is a very natural mutation sequence to study.

Recall that $\A$ is the principal flag variety $G/U$ for $G=SL_k$. It turns out the one can find a copy of this cluster structure ``inside'' $\Gr (k,3(k-1))$ (see the Lemma~\ref{clusterinside}). Thus we can perform the cactus sequence within the cluster structure on $\Gr (k,3(k-1))$. This sequence will allow us to access many of the rank $2$ functions on the Grassmannian. In fact, up to stretching, these functions will give us all the rank $2$ webs in $\mathcal{W}$.

We use that there is a natural map
$$\phi: \Gr(k,3(k-1)) \rightarrow \Conf_3 \A.$$

The map is defined as follows. Let $v_1, \dots, v_{3k-3}$ be the columns of the matrix representing a point of $\Gr(k,3k-3)$. We send this to the flags 
$$F_1 := (v_1, v_2, \dots, v_{k-1}),$$
$$F_2 := (v_k, v_{k+1}, \dots, v_{2k-2}),$$
$$F_3 := (v_{2k-1}, v_{2k}, \dots, v_{3k-3}).$$
in the notation of Section~\ref{sec:rank2webs}.

\begin{lemma}\label{clusterinside} (see \cite{FL19}) Let $C$ be a cluster on $\Conf_3 \A$. Then there exists a cluster $C'$ on $\Gr(k,3k-3)$ such that $\{\phi^*(f) | f \in C\} \subset C'$. Moreover the quiver for $C'$ restricts to the quiver for $C$. Thus we can view the cluster structure on $\Conf_3 \A$ is a cluster substructure of the cluster structure on $\Gr(k,3k-3)$.
\end{lemma}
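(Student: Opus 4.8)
The statement is attributed to \cite{FL19}, so the plan is to recall the relevant construction from that paper and indicate how it specializes to the case at hand rather than to reprove it from scratch. First I would recall that the map $\phi: \Gr(k,3k-3) \to \Conf_3 \A$ is precisely one of the projections exhibiting $\Gr(k,3k-3)$ as (birationally) a fibration over $\Conf_3 \A$: grouping the $3k-3$ columns into three blocks of $k-1$ and reading off the associated flags is exactly the geometric setup of \cite{FL19}, where such a projection is shown to be a \emph{cluster fibration} (or to carry a compatible cluster structure). The key input is that $\phi$ is dominant with a section, or at least that $\phi^*$ is injective on the relevant function fields, so that the coordinate ring of $\Conf_3 \A$ embeds into that of $\Gr(k,3k-3)$ and cluster variables pull back to cluster variables.

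The main steps I would carry out are: (1) Fix a cluster $C$ on $\Conf_3 \A$ — it suffices to do this for one convenient seed, e.g.\ one coming from a triangulation/double wiring diagram as in the previous section, since the general case then follows by mutation. (2) Write down the pullbacks $\phi^*(f)$ for $f \in C$; these are generalized minors / web functions on $\Gr(k,3k-3)$ in the $3k-3$ columns, and one checks they are algebraically independent and that the quiver of $C$ (the exchange matrix restricted to the $\phi^*(f)$) is realized as a subquiver of a seed on $\Gr(k,3k-3)$ — this uses that the columns not yet ``used'' in the block structure, together with some frozen/cyclic data, can be adjoined to complete $\{\phi^*(f)\}$ to a full cluster $C'$, with the new arrows only involving the adjoined variables. (3) Invoke that the pullback along a cluster fibration of \cite{FL19} intertwines mutation: mutating $f \mapsto f'$ in $C$ corresponds to mutating $\phi^*(f) \mapsto \phi^*(f')$ in $C'$, since $\phi^*$ is a ring homomorphism and the exchange relation is an identity of functions that pulls back. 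Hence the subquiver statement is preserved under all mutations, giving the ``living inside'' assertion. (4) Finally, observe that this is exactly the $m=3$ case of the general ``Grassmannian contains $\Conf_m \A$'' statement, so I would simply cite \cite[Proposition~3.4]{FL19} (as done for the rank~2 case with $m=4$ earlier) and spell out the block decomposition into $k-1$ columns each.

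The main obstacle is step (2): verifying that the pullbacks $\phi^*(f)$ genuinely complete to a cluster whose quiver \emph{restricts} to that of $C$, i.e.\ that no extra arrows appear between the $\phi^*(f)$ beyond those already present in $C$. This is the content of the cluster-fibration property and is where one must be careful about frozen variables (the $\Conf_3 \A$ frozens may split or merge with Grassmannian frozens, exactly the ``wrinkle'' noted after the $\Conf_4 \A$ lemma). I expect that, as with the $\Conf_4$ case, this follows formally once one knows $\phi$ is a cluster fibration in the sense of \cite[Proposition~3.4(1)]{FL19}, so the honest work is just to match conventions (column groupings, orientation of the quiver, choice of initial seed) and then cite that proposition; there is no new cluster-algebraic content to prove here beyond what \cite{FL19} already supplies.
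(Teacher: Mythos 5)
Your proposal is correct and matches the paper's treatment: the paper gives no proof of this lemma at all, simply attributing it to \cite{FL19}, and your plan likewise reduces to citing the relevant construction there (the block decomposition of the $3k-3$ columns into three flags and the cluster-fibration/subquiver property) while correctly flagging that the only real work is matching conventions and handling frozen variables. No substantive difference in approach.
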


The proof of Lemma~\ref{stretchinglemma} is inspired by this result, in fact we use the same idea.

Now let us define the functions appearing in the cactus sequence. Let $a, b, c, d \leq k$, $a>d$, $a+b+c+d=2k$. Moreover, assume that $a-d, b, c$ satisfy the triangle inequality. For any such $a,b,c,d$ we define the function $\tcfr{a,d}{b}{c}$ on $\Conf_3 \A$. It is determined by its pullback to the Grassmannian $\Gr (k,3(k-1))$, $\phi^*(\tcfr{a,d}{b}{c})$. Because we will be primarily working with functions on the Grassmannian, let us simply call this pullback function $\tcfr{a,d}{b}{c}$ as well.

Let
$$R=\{d+1, d+2, \dots, a\},$$
$$S=\{k, k+1, \dots, k+b-1\},$$
$$T=\{2k-1, 2k, \dots, 2k+c-2\},$$
$$V=\{1, 2, \dots, d\}$$

\begin{definition} Define
\begin{enumerate}
    \item $\tcfr{a,d}{b}{c} := W(R,S,T,V)$.
    \item The function $\tcfr{a}{b}{c}$ for $a+b+c=k$ and $a, b, c \leq k-1$ is the Pl\"ucker coordinate involving $1, 2, \dots, a, k, k+1, \dots k+b-1, 2k-1, 2k, \dots, 2k+c-2$.
\end{enumerate}
It is clear from the definition that the function $W(R,S,T,V)$ descends to a function on $\Conf_3 \A$. 
\end{definition}

\begin{figure}[H]
    \centering
    \includegraphics[width=12cm]{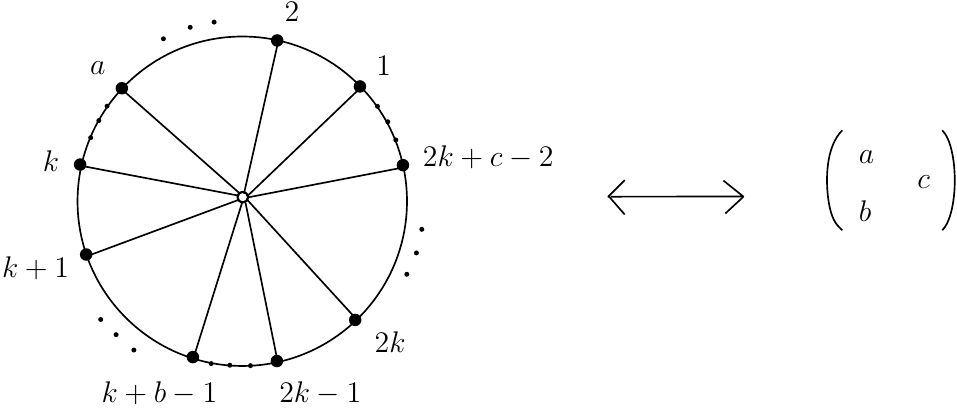}
    \caption{Rank $1$ webs.}
    \label{webPlucker}
\end{figure}

\begin{example}
    
The following webs correspond to the functions $\bigl(\begin{smallmatrix}
4,2& \\ &2\\ 2& 
\end{smallmatrix} \bigr)$, $\bigl(\begin{smallmatrix}
4& \\ &4\\ 2& 
\end{smallmatrix} \bigr)$, $\bigl(\begin{smallmatrix}
3,1& \\ &3\\ 3& 
\end{smallmatrix} \bigr)$ in $\Gr(5,12)$. Notice that, up to the leaves intersecting, these webs are trees, so the fact that they are cluster variables in $\Gr(5,12)$ confirms the philosophy of Fomin and Pylyavskyy \cite[Conjecture 10.1]{FP}. We can see that these functions, via stretching, can live in many Grassmannians, and they will be cluster variables thanks to Theorem~\ref{thm:web-cluster}.

\begin{figure}[H]
    \centering
    \includegraphics[width=14cm]{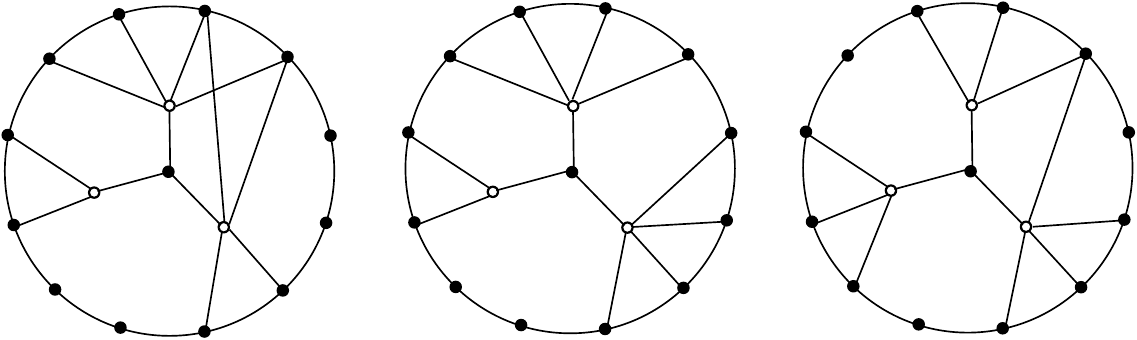}
    \caption{Rank $2$ webs.}
    \label{web1}
\end{figure}

\end{example}

The cactus sequence begins with a cluster that includes all of the functions $\tcfr{a}{b}{c}$ for $a+b+c=k$. Over the course of the mutations in the cactus sequence, one sees all of the functions of the form $\tcfr{a,d}{b}{c}$ defined above. At the end of the sequence, one is left with the functions $\tcfr{a,d}{b}{c}$ where $d=0$, and these are the all the rank $2$ webs in $\mathcal{W}$, up to stretching.

\begin{center}
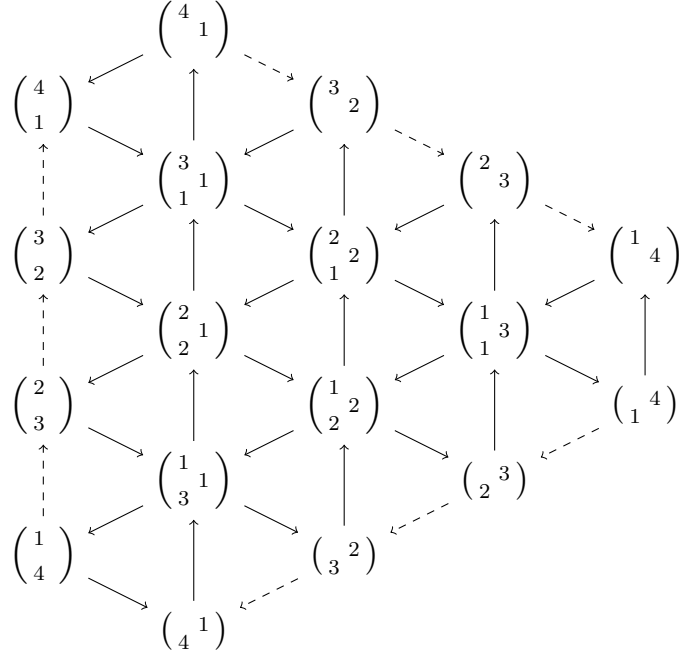
\begin{figure}[H]
\begin{tikzpicture}[scale=2]
  \node (x410) at (-2,0) {$\tcfr{4}{1}{}$};
  \node (x320) at (-2,-1) {$\tcfr{3}{2}{}$};
  \node (x230) at (-2,-2) {$\tcfr{2}{3}{}$};
  \node (x140) at (-2,-3) {$\tcfr{1}{4}{}$};

  \node (x401) at (-1,0.5) {$\tcfr{4}{}{1}$};
  \node (x311) at (-1,-0.5) {$\tcfr{3}{1}{1}$};
  \node (x221) at (-1,-1.5) {$\tcfr{2}{2}{1}$};
  \node (x131) at (-1,-2.5) {$\tcfr{1}{3}{1}$};
  \node (x041) at (-1,-3.5) {$\tcfr{}{4}{1}$};

  \node (x302) at (0,0) {$\tcfr{3}{}{2}$};
  \node (x212) at (0,-1) {$\tcfr{2}{1}{2}$};
  \node (x122) at (0,-2) {$\tcfr{1}{2}{2}$};
  \node (x032) at (0,-3) {$\tcfr{}{3}{2}$};

  \node (x203) at (1,-0.5) {$\tcfr{2}{}{3}$};
  \node (x113) at (1,-1.5) {$\tcfr{1}{1}{3}$};
  \node (x023) at (1,-2.5) {$\tcfr{}{2}{3}$};

  \node (x104) at (2,-1) {$\tcfr{1}{}{4}$};
  \node (x014) at (2,-2) {$\tcfr{}{1}{4}$};

  \draw [->] (x014) to (x104);
  \draw [->] (x023) to (x113);
  \draw [->] (x113) to (x203);
  \draw [->] (x032) to (x122);
  \draw [->] (x122) to (x212);
  \draw [->] (x212) to (x302);
  \draw [->] (x041) to (x131);
  \draw [->] (x131) to (x221);
  \draw [->] (x221) to (x311);
  \draw [->] (x311) to (x401);
  \draw [->, dashed] (x140) to (x230);
  \draw [->, dashed] (x230) to (x320);
  \draw [->, dashed] (x320) to (x410);

  \draw [->] (x401) to (x410);
  \draw [->] (x302) to (x311);
  \draw [->] (x311) to (x320);
  \draw [->] (x203) to (x212);
  \draw [->] (x212) to (x221);
  \draw [->] (x221) to (x230);
  \draw [->] (x104) to (x113);
  \draw [->] (x113) to (x122);
  \draw [->] (x122) to (x131);
  \draw [->] (x131) to (x140);
  \draw [->, dashed] (x014) to (x023);
  \draw [->, dashed] (x023) to (x032);
  \draw [->, dashed] (x032) to (x041);

  \draw [->] (x140) to (x041);
  \draw [->] (x230) to (x131);
  \draw [->] (x131) to (x032);
  \draw [->] (x320) to (x221);
  \draw [->] (x221) to (x122);
  \draw [->] (x122) to (x023);
  \draw [->] (x410) to (x311);
  \draw [->] (x311) to (x212);
  \draw [->] (x212) to (x113);
  \draw [->] (x113) to (x014);
  \draw [->, dashed] (x401) to (x302);
  \draw [->, dashed] (x302) to (x203);
  \draw [->, dashed] (x203) to (x104);

\end{tikzpicture}
\caption{The initial quiver for $\Conf_3 \A$ for $SL_5$.}
\end{figure}
\end{center}

The particulars of this mutation sequence, like the precise sequence of mutations and the clusters that occur along the way, are not so important for us. What is important for us is that each mutation has the following form:

\newpage

\begin{proposition}\label{cactusmutation}~\cite[Proposition 3.2]{L19} 

Every mutation in the cactus sequence transforms cluster variables
\[\tcfr{a,d}{b}{c} \rightsquigarrow \tcfr{a-1,d-1}{b+1}{c+1}\]
via the identity

\[\tcfr{a,d}{b}{c} \tcfr{a-1,d-1}{b+1}{c+1}=\tcfr{a-1,d}{b+1}{c} \tcfr{a,d-1}{b}{c+1} + \tcfr{a-1,d}{b}{c+1} \tcfr{a,d-1}{b+1}{c}\]
with $a,b,c,d$ satisfying the appropriate inequalities for all the terms to make sense.

\end{proposition}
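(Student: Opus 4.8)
The statement is an identity of regular functions on $\Conf_3 \A$, or equivalently (after pulling back along $\phi$) of regular functions on $\Gr(k,3(k-1))$, and the plan is to realize it as a specialization of a classical three-term relation. As a first step I would record that each of the six functions $\tcfr{a,d}{b}{c}$, $\tcfr{a-1,d-1}{b+1}{c+1}$, $\tcfr{a-1,d}{b+1}{c}$, $\tcfr{a,d-1}{b}{c+1}$, $\tcfr{a-1,d}{b}{c+1}$, $\tcfr{a,d-1}{b+1}{c}$ is a web $W(R,S,T,V)$ of the shape fixed above, hence a genuine semi-invariant on $\Conf_3 \A$ with well-defined weights at the three flags $F_1,F_2,F_3$. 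A short bookkeeping of how the parameters $a,b,c,d$ govern the leaf multiplicities at the three blocks of boundary vertices shows that the multiset $\{a,a-1\},\{d,d-1\},\{b,b+1\},\{c,c+1\}$ attached to the left-hand product agrees with the ones attached to each right-hand product, so all three monomials are sections of the same equivariant line bundle on $\Conf_3 \A$. This does not by itself give the identity — the relevant weight space has dimension at least two, which is precisely why the right side is a sum of two terms — so weight considerations only reduce us to exhibiting the correct Plücker-type relation.

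I would obtain that relation by making the six functions explicit. Via the recipe translating webs on $\Gr(k,3k-3)$ into functions, together with the quasi-isomorphism between the cluster structure on $\Conf_3 \A$ and that on the reduced double Bruhat cell recalled in Section~\ref{sec:rank2webs}, each $\tcfr{a,d}{b}{c}$ can be matched, up to a monomial in frozen variables, with a generalized minor $\Delta_{u\omega_i,v\omega_i}$ for an explicit triple $(u,v,i)$ depending on $a,b,c,d$. Under this dictionary the asserted identity becomes one of the standard three-term exchange relations among generalized minors proved by Berenstein–Fomin–Zelevinsky~\cite{BFZ}, which is in turn a consequence of the ordinary Plücker relations; one then checks that the frozen monomials stripped off the two sides coincide, which holds because the peripheral leaf data at the three blocks is the same for the two left-hand factors as for the factors of each right-hand term, so the frozen contributions cancel. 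A parallel, more pictorial route avoids the double Bruhat cell: superimpose the two trees computing the left-hand product inside the disc, locate the unique square configuration of four strands passing through the two internal vertices, and apply the basic $\SL_k$ skein relation \cite{FP} there; the two resolutions are then read off as the superposed tree pairs $\tcfr{a-1,d}{b+1}{c}\cdot\tcfr{a,d-1}{b}{c+1}$ and $\tcfr{a-1,d}{b}{c+1}\cdot\tcfr{a,d-1}{b+1}{c}$.

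I expect the main obstacle, on either route, to be the bookkeeping in this identification step: pinning down $(u,v,i)$ — respectively the square and its two resolutions — as functions of $a,b,c,d$, checking that the triangle and size inequalities needed for the intermediate webs $\tcfr{a-1,d}{b+1}{c}$, $\tcfr{a,d-1}{b}{c+1}$, $\tcfr{a-1,d}{b}{c+1}$, $\tcfr{a,d-1}{b+1}{c}$ to be defined all hold under the hypotheses of the proposition, and verifying that the classical relation specializes — with every frozen and peripheral factor accounted for — to exactly the stated equation rather than to a scalar multiple of it. Once that dictionary is fixed the remaining computation is the classical Plücker relation and is immediate; and since the statement is exactly \cite[Proposition 3.2]{L19}, one may also simply cite it and use the above only as a sanity check.
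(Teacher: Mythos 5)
The first thing to note is that the paper does not prove this proposition at all: it is imported verbatim as \cite[Proposition 3.2]{L19}, so your closing remark --- that one may simply cite that result --- is exactly what the authors do, and in that limited sense your proposal matches the paper's treatment. The substantive question is whether your sketched independent argument would work, and your main route has a concrete gap. You appeal to ``the quasi-isomorphism between the cluster structure on $\Conf_3 \A$ and that on the reduced double Bruhat cell recalled in Section~\ref{sec:rank2webs}'', but what that section recalls is a quasi-isomorphism between $\Conf_4 \A$ and $G^{w_0,w_0}$; no such dictionary is set up for $\Conf_3 \A$, which is where the functions $\tcfr{a,d}{b}{c}$ live, and the two spaces do not even have the same dimension. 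Worse, the matching you want is obstructed by your own weight bookkeeping: for $0<d$ and $a<k$ the weight of $\tcfr{a,d}{b}{c}$ at the flag $F_1$ is $\omega_a+\omega_d$ (the function depends on both the $a$-form and the $d$-form of $F_1$), which is not an extreme weight $u\omega_i$ of any fundamental representation, whereas a generalized minor $\Delta_{u\omega_i,v\omega_i}$ has extreme weight on each side. So the generic intermediate cactus functions are not generalized minors up to frozens in any evident way, the clusters traversed by the cactus sequence need not be of double-reduced-word type (BFZ's theorem only produces generalized minors for those seeds), and the three-term BFZ relation --- in which one of the two right-hand products involves the neighboring weights $\omega_{i\pm1}$ with exponents $-a_{ji}$ --- cannot be invoked as stated without substantial additional work that your sketch does not supply.

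Your second, pictorial route is much closer to how identities of this kind are actually proven and to the spirit of the rest of the paper: all six webs factor through the two-dimensional internal space at the black vertex (the same $\Bbbk^2$ that appears in the proof of Lemma~\ref{indecs}), and the identity is the $\Gr(2,4)$ Pl\"ucker relation applied to the four lines in that space cut out by the two superimposed trees. But as written, ``apply the basic $\SL_k$ skein relation at the unique square'' is not a proof for $k>3$: the higher-rank web calculus has a more involved set of local relations, and the two-term resolution you want must be derived rather than quoted. Your weight comparison in the first paragraph is correct and is correctly identified as insufficient on its own. If you want a self-contained argument rather than the citation, develop the internal two-dimensional-space computation explicitly; as the statement stands in the paper, the citation to \cite{L19} \emph{is} the proof.
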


Some of the mutation identities will be degenerations of the above identity using one of the following identities:
\[\tcfr{a,d}{b}{c} = \tcfr{a}{b}{}\tcfr{d}{}{c} \textrm{ if } a+b=c+d=k. \]
\[\tcfr{a,d}{b}{c} = \tcfr{a}{}{c}\tcfr{d}{c}{} \textrm{ if } a+c=b+d=k.\]
\[\tcfr{k,d}{b}{c} = \tcfr{d}{b}{c} \textrm{ if } b+c+d=k.\]
Note that in Proposition~\ref{cactusmutation}, all the mutations involve rank $2$ functions. However, we can use the degeneracy to view rank $1$ functions (Pl\"ucker coordinates) as rank $2$ functions: Thus we start with the function $\tcfr{d}{b}{c} = \tcfr{k,d}{b}{c}$ and mutate until we end up with the function $\tcfr{k-d,0}{b+d}{c+d}$.

For example, there is a mutation
$$\tcfr{k-2}{1}{1}\tcfr{k-1,k-3}{2}{2}=\tcfr{k,k-2}{1}{1}\tcfr{k-1,k-3}{2}{2}=
\tcfr{k-1,k-2}{2}{1}\tcfr{k,k-3}{1}{2}+\tcfr{k-1,k-2}{1}{2}\tcfr{k,k-3}{2}{1}=$$
$$\tcfr{k-1}{}{1}\tcfr{k-2}{2}{}\tcfr{k-3}{1}{2}+\tcfr{k-1}{1}{}\tcfr{k-2}{}{2}\tcfr{k-3}{2}{1}.$$
The first and last identities are degeneracies, while the middle identity is the cactus mutation.

In the cases where 
\[\tcfr{a,d}{b}{c} = \tcfr{a}{b}{}\tcfr{d}{}{c},\]
the module one would attach to the cluster variable $\tcfr{a,d}{b}{c}$ (using a degenerate case of the procedure we described for $\Psi$) is the direct sum of the modules attached to $\tcfr{a}{b}{}$ and $\tcfr{d}{}{c}$. The analogous statement is true when 
\[\tcfr{a,d}{b}{c} = \tcfr{a}{}{c}\tcfr{d}{c}{} \textrm{ if } a+c=b+d=k.\]
Thus these cases can be treated uniformly in the proof of Theorem~\ref{thm:ses} below. The final degeneracy changes the profiles of the modules in proof of Theorem~\ref{thm:ses}, but it actually simplifies the arguments.

Because the functions in this sequence are all of the ones in $\mathcal{W}$, up to stretching, by Lemma~\ref{stretchinglemma} it is enough to understand the modules attached to the functions in this sequence. Our strategy is to use the cactus mutation sequence to recursively verify that the module corresponding to $W \in \mathcal{W}$ is $\Psi(W)$ for all the functions $\tcfr{a,d}{b}{c}$.

At any stage in the cactus sequence, we know the modules that categorify the cluster variables 
\[\tcfr{a,d}{b}{c}, \tcfr{a-1,d}{b+1}{c}, \tcfr{a,d-1}{b}{c+1}, \tcfr{a-1,d}{b}{c+1}, \tcfr{a,d-1}{b+1}{c}\]
are the ones coming from the map $\Psi$. From this, we would like to compute the module which categorifies $\tcfr{a-1,d-1}{b+1}{c+1}$. 

In order to do this, we will first show that $\Psi\tcfr{a-1,d-1}{b+1}{c+1}$ sits in the appropriate exact sequences:

\begin{proposition}~\label{thm:ses} The following are short exact sequences of modules in the categorification of $\Gr(k,3(k-1))$:

\[0\rightarrow\Psi\tcfr{a,d}{b}{c} \rightarrow \Psi\tcfr{a-1,d}{b}{c+1}\oplus \Psi\tcfr{a,d-1}{b+1}{c}\rightarrow \Psi\tcfr{a-1,d-1}{b+1}{c+1}\rightarrow 0 \]

\[0\rightarrow\Psi\tcfr{a-1,d-1}{b+1}{c+1}\rightarrow \Psi\tcfr{a-1,d}{b+1}{c}\oplus \Psi\tcfr{a,d-1}{b}{c+1}\rightarrow \Psi\tcfr{a,d}{b}{c}\rightarrow 0 \]

\end{proposition}

\begin{proof} Recall that we may write the profile of the rank $2$ module $\Psi\tcfr{a,d}{b}{c}$ as a stack of contours:

$$\frac{D^{d+x}U^{y+k-a}D^{y}U^{z+k-b}D^{z}U^{x+k-c}}{D^{d}U^{x}D^{y}U^{k-a+y}D^{z}U^{k-b+z}D^{x}U^{k-c}} = $$
$$\left(\frac{D}{D}\right)^{d} \left(\frac{D}{U}\right)^{x} \left(\frac{U}{D}\right)^{y} \left(\frac{U}{U}\right)^{k-a} \left(\frac{D}{U}\right)^{y} \left(\frac{U}{D}\right)^{z} \left(\frac{U}{U}\right)^{k-b} \left(\frac{D}{U}\right)^{z} \left(\frac{U}{D}\right)^{x} \left(\frac{U}{U}\right)^{k-c} $$

Thus, for example, the first exact sequence in the theorem can be written as:
$$0 \rightarrow \frac{D^{d+x}U^{y+k-a}D^{y}U^{z+k-b}D^{z}U^{x+k-c}}{D^{d}U^{x}D^{y}U^{k-a+y}D^{z}U^{k-b+z}D^{x}U^{k-c}} \rightarrow$$
$$\frac{D^{d+x}U^{y+k-a}D^{y-1}U^{z+1+k-b}D^{z+1}U^{x+k-c-1}}{D^{d}U^{x}D^{y-1}U^{k-a+y}D^{z+1}U^{k-b+z+1}D^{x}U^{k-c-1}} \oplus 
\frac{D^{d-1+x}U^{y+1+k-a}D^{y+1}U^{z+k-b-1}D^{z}U^{x+k-c}}{D^{d-1}U^{x}D^{y+1}U^{k-a+y+1}D^{z}U^{k-b-1+z}D^{x}U^{k-c}}$$
$$\rightarrow \frac{D^{d-1+x}U^{y+k-a+1}D^{y}U^{z+k-b}D^{z+1}U^{x+k-c-1}}{D^{d-1}U^{x}D^{y}U^{k-a+1+y}D^{z+1}U^{k-b+z}D^{x}U^{k-c-1}} \rightarrow 0$$

Written this way, the exact sequence does not look very illuminating. Let us examine the exact sequence in a representative example, which will indicate how the general argument works. Take $a=5, b=4, c=4, d=1$. In this case we get that $x,y,z=2$. We are interested in the exact sequences

\[0\rightarrow\Psi\tcfr{5,1}{4}{4} \xrightarrow{f} \Psi\tcfr{4,1}{4}{5}\oplus \Psi\tcfr{5}{5}{4}\xrightarrow{g} \Psi\tcfr{4}{5}{5}\rightarrow 0 \]

\[0\rightarrow\Psi\tcfr{4}{5}{5}\xrightarrow{f'} \Psi\tcfr{4,1}{5}{4}\oplus \Psi\tcfr{5}{4}{5}\xrightarrow{g'} \Psi\tcfr{5,1}{4}{4}\rightarrow 0 \]

We wish to see these as exact sequences in $\Gr(7,18)$. However, none of the functions involved uses the vectors $v_6, v_{12}$ or $v_{18}$. Thus we may actually work in $\Gr(7,15)$ by eliminating $v_6, v_{12}, v_{18}$ and reindexing in order to make things slightly simpler. We will exhibit exact sequences of modules for the categorification of $\Gr(7,15)$ which can be stretched to give exact sequences of modules for the categorification of $\Gr(7,18)$.

We start with the first exact sequence. Below, we picture the modules for $\Psi\tcfr{4,1}{4}{5}$ and $\Psi\tcfr{5}{5}{4}$.

\begin{figure}[h]
    \centering
    \includegraphics[width=15cm]{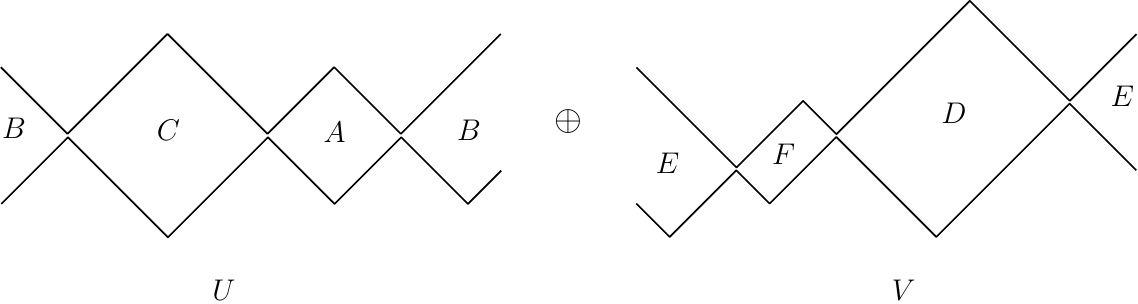}
\end{figure}

The regions of the diagram are labelled by vector spaces satisfying the following:
\begin{itemize}
    \item $U, V, W$ are two dimensional spaces;
    \item $A, B, C \subset U$ are one-dimensional spaces spanned by $a, b, c$, respectively, such that $a+b+c=0$.
    \item $D, E, F \subset V$ are one-dimensional spaces spanned by $d, e, f$, respectively, such that $d+e+f=0$.
    \item $G, H, I \subset W$ are one-dimensional spaces spanned by $g, h, i$, respectively, such that $g+h+i=0$.
\end{itemize}

We can now describe the map $f$ and show that it is injective. We map $\Psi\tcfr{5,1}{4}{4}$ to the direct sum of $\Psi\tcfr{4,1}{4}{5}$ and $\Psi\tcfr{5}{5}{4}$ by mapping

\begin{enumerate}[(i)]
    \item $g \rightarrow a+d$ so that $G\hookrightarrow A\oplus D \hookrightarrow D\oplus U$;
    \item $h \rightarrow b+e$ so that $H\hookrightarrow B\oplus E \hookrightarrow B\oplus V$;
    \item $i \rightarrow c+f$ so that $I\hookrightarrow C\oplus F \hookrightarrow C \oplus V$;
    \item the previous maps are compatible and determine a map $U\hookrightarrow V\oplus W$.
\end{enumerate}

In the following diagram, we see $\Psi\tcfr{5,1}{4}{4} \hookrightarrow \Psi\tcfr{4,1}{4}{5} \oplus \Psi\tcfr{5}{5}{4}$:

\begin{figure}[h]
    \centering
    \includegraphics[width=15cm]{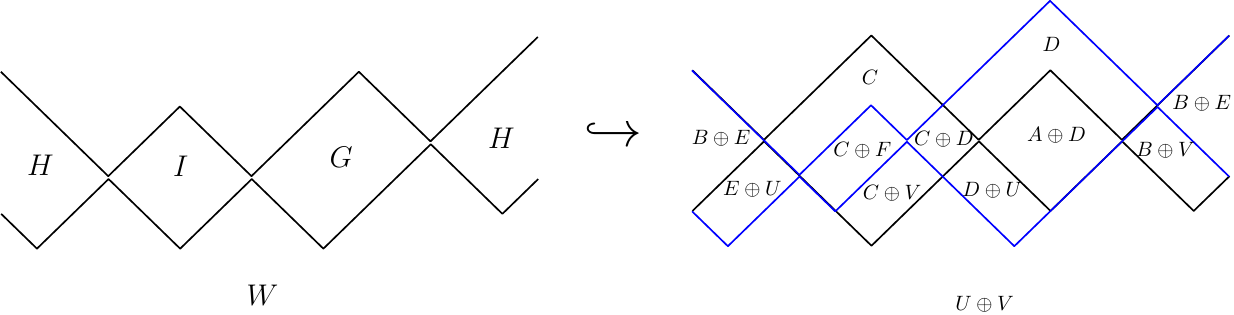}
\end{figure}

Now, we take the cokernel of this map $f$. We use the following natural isomorphisms that follow from linear algebra:

\begin{enumerate}[(i)]
    \item $A \sim \frac{A \oplus D}{K} \sim D$
    \item $B \sim \frac{B \oplus E}{H} \sim E$
    \item $C \sim \frac{C \oplus F}{K} \sim F$
    \item $\frac{U \oplus V}{W} \sim \frac{U \oplus D}{G}  \sim \frac{A \oplus V}{G} \sim \frac{U \oplus E}{H} \sim \frac{B \oplus V}{H} \sim \frac{U \oplus F}{I}  \sim \frac{C \oplus V}{I}$
\end{enumerate}
(Note that this list of identities contains more than we need in this particular case, but it is the complete list of identities one uses in the general case.)

From this, we obtain the following module which is the desired result:
\begin{figure}[h]
    \centering
    \includegraphics[width=10cm]{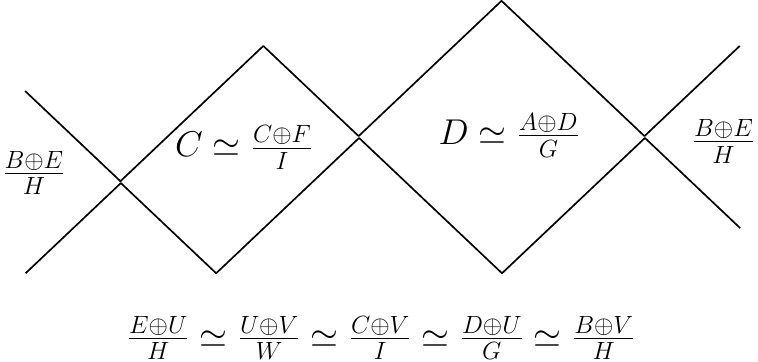}
\end{figure}
The module is the rank $2$ indecomposable module that we seek, corresponding to $\Psi(\tcfr{4}{5}{5}).$

We can similarly check the second exact sequence. Below are the modules $\Psi\tcfr{4,1}{5}{4}$, $\Psi\tcfr{5}{4}{5}$ and their direct sum $\Psi\tcfr{4,1}{5}{4}\oplus \Psi\tcfr{5}{4}{5}$:

\begin{figure}[H]
    \centering
    \includegraphics[width=15cm]{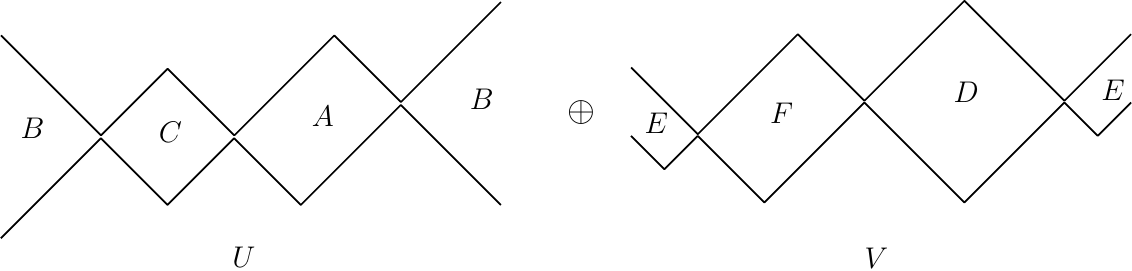}
\end{figure}

The module $\Psi\tcfr{4}{5}{5}$ includes into this direct sum via the map $f'$, defined as in the previous case:

\begin{figure}[H]
    \centering
    \includegraphics[width=15cm]{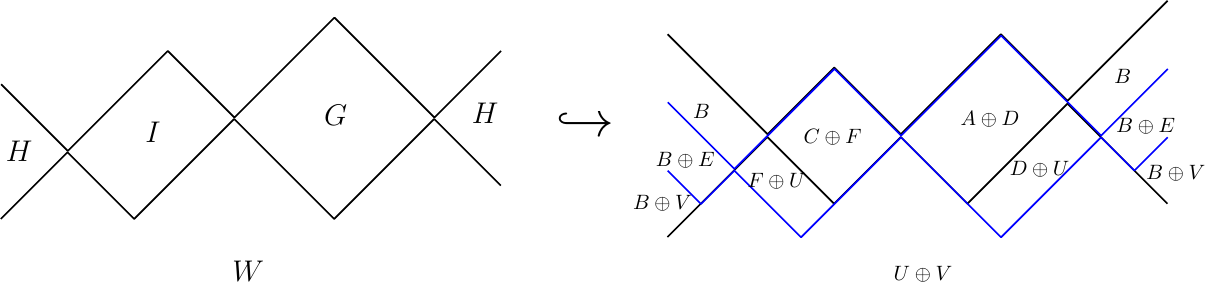}
\end{figure}

Using the same identities, we see that the quotient is the module $\Psi\tcfr{5,1}{4}{4}$:

\begin{figure}[H]
    \centering
    \includegraphics[width=10cm]{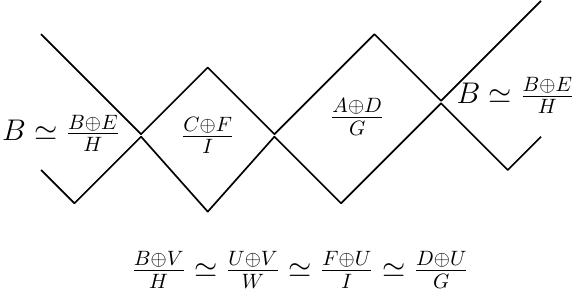}
\end{figure}

Thus once we know that the functions $\tcfr{5,1}{4}{4}, \tcfr{4,1}{5}{4}, \tcfr{5}{4}{5},\tcfr{4,1}{4}{5}, \tcfr{5}{5}{4}$ all correspond to the modules given by applying $\Psi$, we see that this is also the case for $\tcfr{4}{5}{5}$.

The general argument will use the exact same diagrams, with the proportions of various rectangles scaled appropriately. One needs to make slight modifications in the case of the degenerate exact sequences, but these sequences are actually simpler.

\end{proof}

\begin{proposition}
    For the clusters appearing in the cactus sequence, the irreducible maps come in four types (up to degeneration):
    \begin{itemize}
        \item[(i)] $\Psi\tcfr{a,d}{b}{c} \rightarrow \Psi\tcfr{a-1,d}{b}{c+1}$
        \item[(ii)] $\Psi\tcfr{a,d}{b}{c} \rightarrow \Psi\tcfr{a,d-1}{b+1}{c}$
        \item[(iii)] $\Psi\tcfr{a,d}{b}{c} \rightarrow \Psi\tcfr{a,d+1}{b}{c-1}$
        \item[(iv)] $\Psi\tcfr{a,d}{b}{c} \rightarrow \Psi\tcfr{a+1,d}{b-1}{c}$
    \end{itemize}
    These maps are given by the maps in Figure~\ref{fig:irr-maps} up to stretching.
\end{proposition}

\begin{figure}[h]
    \centering
    \includegraphics[width=0.5\linewidth]{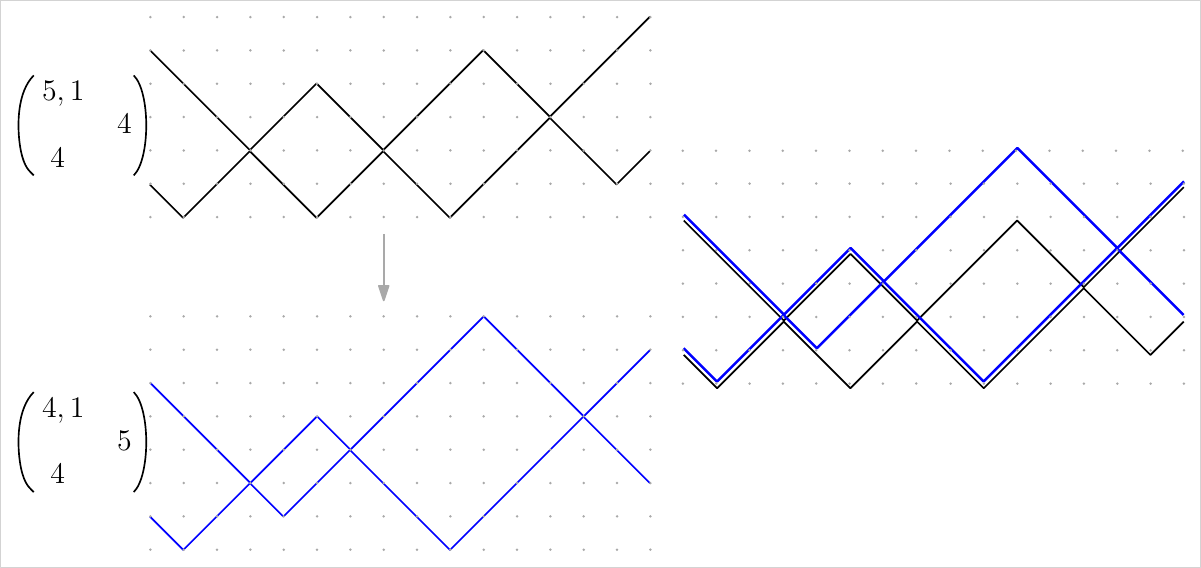}\includegraphics[width=0.5\linewidth]{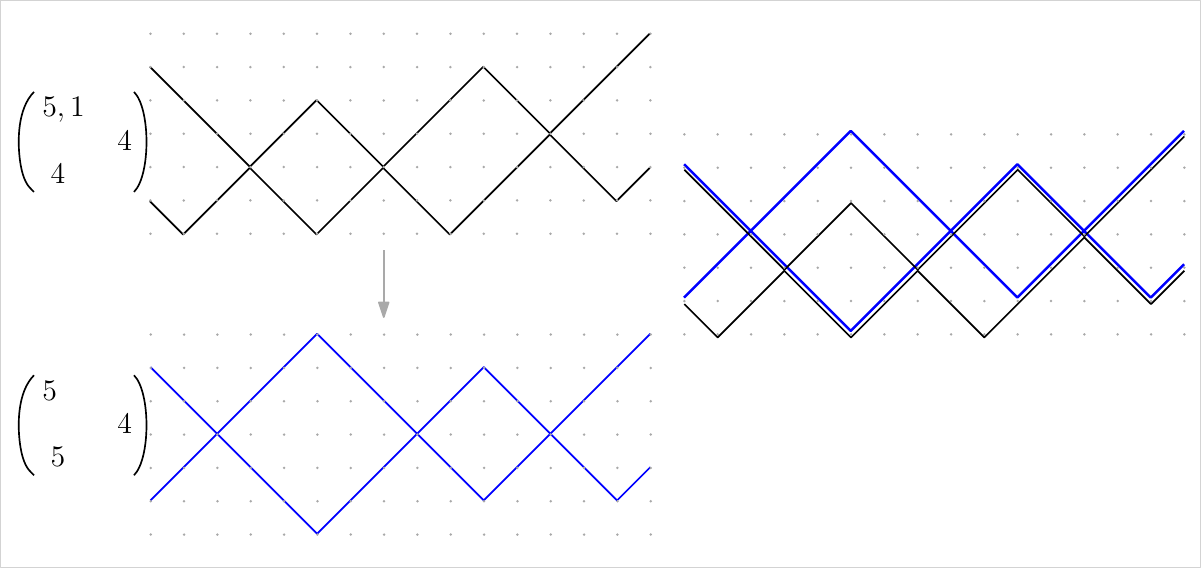}
    \includegraphics[width=0.5\linewidth]{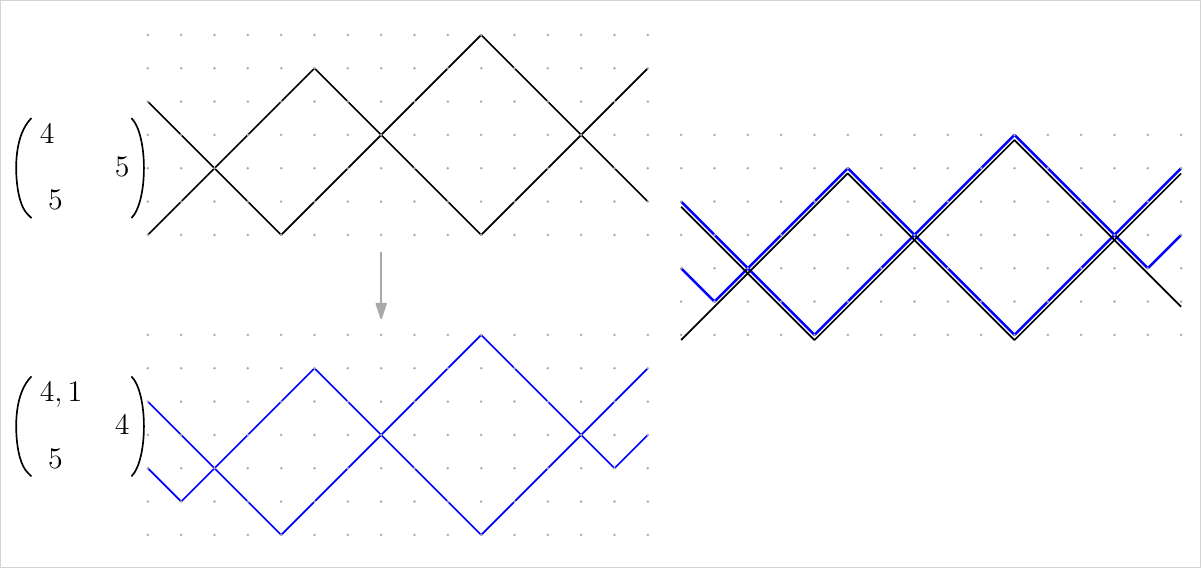}\includegraphics[width=0.5\linewidth]{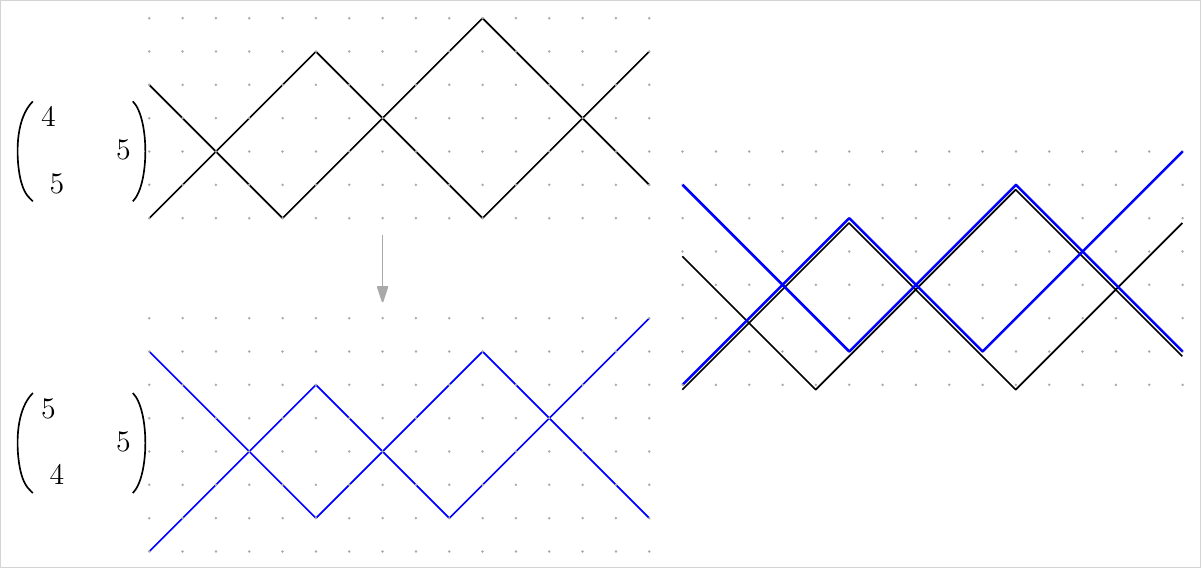}
    \caption{Four different irreducible maps in the cactus sequence.}
    \label{fig:irr-maps}
\end{figure}

\begin{proof}

These maps can be tracked through the cactus sequence inductively. As explained in~\cite[Proposition 3.2]{L19}, there are four types of mutations in the sequence including degenerations. For each of these types of mutations, one should check if the irreducible maps are as stated in the proposition before and after the mutation. This is a finite check.

\end{proof}

\begin{corollary}
    The mutation sequences in Proposition~\ref{thm:ses} are non-split exact sequences giving minimal left (respectively right) $\add$-$(T/M)$-approximations of $M=\Psi\tcfr{a,d}{b}{c}$ for the appropriate cluster-tilting object $T$.
\end{corollary}

In other words, they allow us to inductively conclude that if $\Psi\tcfr{a,d}{b}{c}$ categorifies $\tcfr{a,d}{b}{c}$, then $\Psi\tcfr{a-1,d-1}{b+1}{c+1}$ categorifies $\tcfr{a-1,d-1}{b+1}{c+1}$. From this we may conclude:

\begin{theorem}
       The pull-back of the function $\tcfr{a,d}{b}{c}$ from $\Conf_3 \A$ to $\Gr(k,3k-3)$ is a cluster variable whose categorification is given by the module $\Psi\tcfr{a,d}{b}{c}$. 
\end{theorem}

Now we return to the proof of our main result, Theorem~\ref{mainthm}:

\begin{proof}[Proof of Theorem \ref{mainthm}]
    We saw that any web in $\mathcal{W}$ comes from stretching a primitive web in $\Gr(k,2k)$, and is thus a cluster variable (Theorem~\ref{thm:web-cluster}, Corollary~\ref{cor:allGrassm}). Any web in $\mathcal{W}$ is also related via stretching with the functions $\tcfr{a,d}{b}{c}$. For the webs $\tcfr{a,d}{b}{c}$ we know the corresponding modules (as defined in Section~\ref{sec:wm}) by Theorem~\ref{thm:ses}.
    
    But by Lemma~\ref{stretchinglemma}, we understand how stretching of functions corresponds to stretching of modules: if $\Psi(W)$ is the module corresponding to $W$, this holds for any web obtained from stretching $W$. Thus for all $W \in \mathcal{W}$, $\Psi(W)$ is the module corresponding to $W$.

    This gives the bijection between $\mathcal{W}$ and $\mathcal{M}$.
\end{proof}

\bibliographystyle{alpha}
\bibliography{references}

\end{document}